\tikzset{
	level 1/.style = {sibling distance = 1.5cm},
	level 2/.style = {sibling distance = 0.8cm},
    level distance = 1.0 cm
}
\tikzstyle{snakeline} = [decorate, decoration={snake, amplitude=.4mm, segment length=2mm}]
\newtheorem{theorem}{Theorem}
\newtheorem{corollary}[theorem]{Corollary}
\newtheorem{proposition}[theorem]{Proposition}
\newtheorem{lemma}[theorem]{Lemma}
\newtheorem{definition}[theorem]{Definition}
\newtheorem{problem}[theorem]{Problem}
\newtheorem*{HBtheorem}{Hermite-Biehler Theorem}
\newcommand{\mdd}{\mathcal{D}}
\newcommand{\even}{{\rm even\,}}
\newcommand{\negg}{{\rm neg\,}}
\newcommand{\Plat}{{\rm Plat}}
\newcommand{\Dplat}{{\rm Dplat}}
\newcommand{\Pasc}{{\rm Pasc}}
\newcommand{\Rpd}{{\rm Rpd}}
\newcommand{\Eudd}{{\rm Eud}}
\newcommand{\Dasc}{{\rm Dasc}}
\newcommand{\Lap}{{\rm Lap}}
\newcommand{\dplat}{{\rm dplat}}
\newcommand{\pasc}{{\rm pasc}}
\newcommand{\lends}{{\rm exl}}
\newcommand{\mends}{{\rm exm}}
\newcommand{\rends}{{\rm exr}}
\newcommand{\eudd}{{\rm eud}}
\newcommand{\rpd}{{\rm rpd}}
\newcommand{\Ddes}{{\rm Ddes\,}}
\newcommand{\Des}{{\rm Des\,}}
\newcommand{\Asc}{{\rm Asc\,}}
\newcommand{\udrun}{{\rm udrun\,}}
\newcommand{\dasc}{{\rm dasc\,}}
\newcommand{\plat}{{\rm plat\,}}
\newcommand{\ap}{{\rm ap\,}}
\newcommand{\lap}{{\rm lap\,}}
\newcommand{\ddes}{{\rm ddes\,}}
\newcommand{\des}{{\rm des\,}}
\newcommand{\exc}{{\rm exc\,}}
\newcommand{\mtn}{\mathcal{T}}
\newcommand{\msn}{\mathfrak{S}_n}
\newcommand{\ms}{\mathfrak{S}}
\newcommand{\lrf}[1]{\lfloor #1\rfloor}
\newcommand{\mq}{\mathcal{Q}}
\newcommand{\mqn}{\mathcal{Q}_n}
\newcommand{\asc}{{\rm asc\,}}
\DeclareMathOperator{\R}{\mathbb{R}}
\newcommand{\rz}{{\rm RZ}}
\newcommand{\Stirling}[2]{\genfrac{\{}{\}}{0pt}{}{#1}{#2}}
\title{Stirling permutation codes. II}
\author[S.-M.~Ma]{Shi-Mei Ma}
\address{School of Mathematics and Statistics, Shandong University of Technology, Zibo 255049, Shandong, China}
\email{shimeimapapers@163.com (S.-M. Ma)}
\author[H.~Qi]{Hao Qi}
\address{College of Mathematics and Physics, Wenzhou University, Wenzhou 325035, P.R. China}
\email{qihao@wzu.edu.cn (H.~Qi)}
\author{Jean Yeh}
\address{Department of Mathematics, National Kaohsiung Normal University, Kaohsiung 82444, Taiwan}
\email{chunchenyeh@nknu.edu.tw (J. Yeh)}
\author[Y.-N. Yeh]{Yeong-Nan Yeh}
\address{College of Mathematics and Physics, Wenzhou University, Wenzhou 325035, P.R. China}
\email{mayeh@math.sinica.edu.tw (Y.-N. Yeh)}
\subjclass[2010]{Primary 05A19; Secondary 05E05}
\begin{document}

\maketitle
\begin{abstract}
In the context of Stirling polynomials, Gessel and Stanley introduced the definition of
Stirling permutation, which has attracted extensive attention over the past decades.
Recently, we introduced Stirling permutation code and provided
numerous equidistribution results as applications. The purpose of the present work is to
further analyse Stirling permutation code.
First, we derive an expansion formula expressing the joint distribution of the types $A$ and $B$ descent statistics over the hyperoctahedral group, and
we also find an interlacing property involving the zeros of its coefficient polynomials. Next, we
prove a strong connection between signed permutations in the hyperoctahedral group and Stirling permutations.
Furthermore, we investigate unified generalizations
of the trivariate second-order Eulerian polynomials and ascent-plateau polynomials.
Using Stirling permutation codes, we provide expansion formulas for eight-variable and
seventeen-variable polynomials, which imply several $e$-positive expansions and clarify the connections among several statistics.
Our results generalize the results of B\'ona, Chen-Fu, Dumont, Janson, Haglund-Visontai and Petersen.
\bigskip

\noindent{\sl Keywords}: $e$-Positivity; Eulerian polynomials; Stirling permutations; Signed permutations
\end{abstract}
\date{\today}
\tableofcontents
\section{Introduction}
Let $[n]=\{1,2,\ldots,n\}$ and let $\pm[n]=[n]\cup\{\overline{1},\overline{2},\ldots,\overline{n}\}$, where $\overline{i}=-i$.
The {\it symmetric group} $\msn$ is the group of all permutations on $[n]$, and the {\it hyperoctahedral group} $\msn^B$ is the group of signed permutations on $\pm[n]$ with the property that $\pi\left(\overline{i}\right)=-\pi(i)$ for all $i\in [n]$.
Let $\pi=\pi(1)\pi(2)\cdots \pi(n)\in\msn^B$, and let $\negg(\pi)$ be the number of {\it negative entries} of $\pi$.
The numbers of {\it types $A$ and $B$ descent} statistics of $\pi$ are respectively defined by
$$\operatorname{des}_A(\pi)=\#\{i\in\{1,\ldots,n-1\}\mid \pi(i)>\pi(i+1)\},$$
$$\operatorname{des}_B(\pi)=\#\{i\in\{0,1,\ldots,n-1\}\mid \pi(i)>\pi(i+1),~\pi(0)=0\}.$$

In~\cite{Brenti94}, Brenti studied the following {\it Eulerian polynomial of type $B$} and its $q$-analog:
\begin{equation*}\label{BnxDef}
B_n(x)=\sum_{\pi\in \msn^B}x^{\operatorname{des}_B(\pi)},~B_n(x,q)=\sum_{\pi\in \msn^B}x^{\operatorname{des}_B(\pi)}q^{\negg(\pi)}.
\end{equation*}
In particular, $B_n(x,0)=A_n(x)=\sum_{\pi\in\msn}x^{\operatorname{des}_A(\pi)}$, where $A_n(x)$ is usually named as {\it the Eulerian polynomial of type $A$}. Since then, there has been a growing interest in the similar properties of Eulerian-type polynomials, including unimodality, real-rootedness, $\gamma$-positivity as well as algebraic and geometric interpretations,
see~\cite{Gessel20,Han2021,Lin21,Ma19,Petersen15,Zhuang16,Zhuang17} for instances.

A remarkable result of Foata-Sch\"utzenberger~\cite{Foata70} says that
\begin{equation*}
A_n(x)=\sum_{k=0}^{\lrf{({n-1})/{2}}}\gamma_1(n,k)x^k(1+x)^{n-1-2k}.
\end{equation*}
Let $\gamma_2(n,k)$ (resp.~$\gamma_3(n,k)$) be the number of permutations in $\msn$ with $k$ interior peaks (resp.~left peaks).
By introducing modified Foata-Strehl action, Br\"and\'{e}n~\cite{Branden08} deduced that
\begin{equation}\label{Anx-gamma}
A_n(x)=\sum_{k=0}^{\lrf{({n-1})/{2}}}\frac{1}{2^{n-1-2k}}\gamma_2(n,k)x^k(1+x)^{n-1-2k}.
\end{equation}
Using the theory of enriched $P$-partitions, Petersen~\cite[Proposition~4.15]{Petersen07} obtained the following $\gamma$-positive expansion:
\begin{equation}\label{Bnxgamma}
B_n(x)=\sum_{i=0}^{\lrf{n/2}}4^i\gamma_3(n,i)x^i(1+x)^{n-2i}.
\end{equation}

Consider the polynomials
$$b_n(x,y)=\sum_{\pi\in\msn^B}x^{\operatorname{des}_A(\pi)}y^{\operatorname{des}_B(\pi)}.$$
Clearly, $b_n(x,1)=2^nA_n(x)$ and $b_n(1,x)=B_n(x)$. Following~\cite{Adin01}, the number of {\it flag descents} of $\pi\in\msn^B$ can be defined by $\operatorname{des}_A(\pi)+\operatorname{des}_B(\pi)$. Hence $b_n(x,x)$ is the flag descent polynomial.
The {\it up-down runs} of $\pi\in\msn$ are the maximal consecutive subsequence that is increasing or decreasing of $\pi$ endowed with a 0
in the front~(see~\cite{Ma22,Stanley08,Zhuang16} for details). Let $\udrun(\pi)$ denote the number of up-down runs of $\pi$. For example, $\udrun(623415)=\udrun(0623415)=5$.
We now present the first result of this paper, which gives a unified extension of~\eqref{Anx-gamma} and~\eqref{Bnxgamma}.
\begin{theorem}\label{thm055}
For any $n\geqslant 2$, the bivariate polynomial $b_n(x,y)$ has the expansion formula:
\begin{equation*}\label{System-recu003}
b_n(x,y)=(1+y)\sum_{k\geqslant 0}4^k\xi(n,k)(xy)^k(1+xy)^{n-1-2k}+y(1+x)\sum_{\ell\geqslant 0}4^k\zeta(n,k)(xy)^{k}(1+xy)^{n-2-2k},
\end{equation*}
where $\xi(n,k)=T(n,2k+1),~\zeta(n,k)=2T(n,2k+2)$, and $T(n,k)$ is the number of permutations in the symmetric group $\msn$ with $k$ up-down runs.
\end{theorem}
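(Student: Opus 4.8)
The plan is to exploit the elementary identity $\operatorname{des}_B(\pi)=\operatorname{des}_A(\pi)+[\pi(1)<0]$, which holds because the only index counted by $\operatorname{des}_B$ and not by $\operatorname{des}_A$ is $i=0$, where $\pi(0)=0>\pi(1)$ precisely when $\pi(1)$ is negative. Writing $t=xy$, this gives
\[
b_n(x,y)=\sum_{\pi\in\msn^B}(xy)^{\operatorname{des}_A(\pi)}y^{[\pi(1)<0]}.
\]
First I would fix the underlying unsigned permutation $\sigma=|\pi(1)|\,|\pi(2)|\cdots|\pi(n)|\in\msn$ and sum over the $2^n$ sign vectors $\epsilon\in\{\pm1\}^n$. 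Since whether $\pi(i)>\pi(i+1)$ depends only on the pair $(\epsilon_i,\epsilon_{i+1})$ and on whether $\sigma$ ascends or descends at $i$, the inner sum is a product of $2\times2$ transfer matrices $M_{\mathrm{asc}}=\left(\begin{smallmatrix}1&t\\1&t\end{smallmatrix}\right)$ and $M_{\mathrm{desc}}=\left(\begin{smallmatrix}t&t\\1&1\end{smallmatrix}\right)$, where a factor $t$ is recorded at each type $A$ descent. Prepending the entry $\pi(0)=0$ as a positive initial state turns the extra factor $y^{[\pi(1)<0]}$ into the first step and simultaneously encodes the up--down run structure of $\sigma$, since $0$ is smaller than every entry; the initial row vector becomes $(1,y)$ and the terminal column vector $(1,1)^{\!\top}$.

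The key observation is that both matrices have rank one, $M_{\mathrm{asc}}=\left(\begin{smallmatrix}1\\1\end{smallmatrix}\right)(1,\,t)$ and $M_{\mathrm{desc}}=\left(\begin{smallmatrix}t\\1\end{smallmatrix}\right)(1,\,1)$, so the matrix product telescopes into a scalar built from the dot products of consecutive factors. Reading these off, each pair of consecutive equal steps contributes $1+xy$, each internal peak of the word $0\,\sigma(1)\cdots\sigma(n)$ contributes $2xy$, each internal valley contributes $2$, the free final sign contributes a boundary factor $1+xy$ or $2$, and the prepended $0$ contributes a left factor equal to $1+y$ when $\sigma(1)<\sigma(2)$ and $xy+y=y(1+x)$ when $\sigma(1)>\sigma(2)$. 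Because the word opens with the forced ascent $0<\sigma(1)$, its peaks and valleys alternate beginning with a peak, so after absorbing the final boundary factor each $\sigma$ contributes a single binomial term $4^{\bullet}(xy)^{\bullet}(1+xy)^{\bullet}$ governed only by $\udrun(\sigma)$. Factoring $1+y$ out of every term with $\sigma(1)<\sigma(2)$ and $y(1+x)$ out of every term with $\sigma(1)>\sigma(2)$ then yields the clean shape
\[
b_n(x,y)=(1+y)\,\Phi(xy)+y(1+x)\,\Psi(xy),
\]
where $\Phi$ and $\Psi$ are polynomials in one variable.

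It remains to identify $\Phi$ and $\Psi$ with the two sums in the statement. Writing $T(n,j)=A_j+B_j$ according to whether $\sigma(1)<\sigma(2)$ or $\sigma(1)>\sigma(2)$, a direct reading of the weights above shows that the coefficient of $4^m(xy)^m(1+xy)^{n-1-2m}$ in $\Phi(xy)$ is $A_{2m}+A_{2m+1}$, while the coefficient of $4^m(xy)^m(1+xy)^{n-2-2m}$ in $\Psi(xy)$ is $2\,(B_{2m+2}+B_{2m+3})$; the index shift by one between $j=\udrun(\sigma)$ and $m=\lfloor j/2\rfloor$ is exactly what produces the two different exponents $n-1-2m$ and $n-2-2m$. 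The entire statement therefore reduces to the single identity $A_j=B_{j+1}$ for all $j$, after which $A_{2m}+A_{2m+1}=B_{2m+1}+A_{2m+1}=T(n,2m+1)=\xi(n,m)$ and $2(B_{2m+2}+B_{2m+3})=2(B_{2m+2}+A_{2m+2})=2T(n,2m+2)=\zeta(n,m)$.

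The main obstacle, and the one genuinely combinatorial point, is this identity, which I expect to prove by the complementation involution $\sigma\mapsto\sigma^{c}$ with $\sigma^{c}(i)=n+1-\sigma(i)$. This map fixes the forced initial ascent $0<\sigma^{c}(1)$ but interchanges every internal ascent and descent, so it preserves all turns of the word $0\,\sigma(1)\cdots\sigma(n)$ except the very first transition $0,\sigma(1),\sigma(2)$, whose turn status flips exactly when the sign of $\sigma(1)-\sigma(2)$ flips. Consequently $\sigma\mapsto\sigma^{c}$ carries the permutations counted by $A_j$ bijectively onto those counted by $B_{j+1}$, giving $A_j=B_{j+1}$ and completing the proof. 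Beyond this the only care required is the boundary bookkeeping for the prepended $0$ and the free final sign, together with the alternation of peaks and valleys that converts the accumulated factors $2xy$ and $2$ into the powers $4^{m}(xy)^{m}$ appearing in the statement.
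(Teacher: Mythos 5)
Your proposal is correct, and it takes a genuinely different route from the paper's. Your reduction $\operatorname{des}_B(\pi)=\operatorname{des}_A(\pi)+[\pi(1)<0]$, the rank-one transfer matrices $M_{\mathrm{asc}}=\left(\begin{smallmatrix}1&t\\1&t\end{smallmatrix}\right)$, $M_{\mathrm{desc}}=\left(\begin{smallmatrix}t&t\\1&1\end{smallmatrix}\right)$ with boundary vectors $(1,y)$ and $(1,1)^{\top}$, and the telescoped factors (left factor $1+y$ or $y(1+x)$; $2xy$ per peak, $2$ per valley, $1+xy$ per continuation; final factor $1+xy$ or $2$) all check out, and the parity bookkeeping is exactly right: the coefficient of $(1+y)4^k(xy)^k(1+xy)^{n-1-2k}$ is $A_{2k}+A_{2k+1}$ and that of $y(1+x)4^k(xy)^k(1+xy)^{n-2-2k}$ is $2(B_{2k+2}+B_{2k+3})$, so everything hinges on $A_j=B_{j+1}$, which your complementation involution does prove: $\sigma\mapsto\sigma^{c}$ fixes the forced ascent $0<\sigma(1)$, flips every later step, hence alters the turn status only at $\sigma(1)$, shifting $\udrun$ by exactly $\pm 1$ while swapping the two classes (one nitpick: under complementation the sign of $\sigma(1)-\sigma(2)$ \emph{always} flips, so "exactly when" should read "always"; also note this is where $n\geqslant 2$ is used). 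The paper instead argues grammatically: Lemma~\ref{lemma1} encodes $b_n(x,y)$ via a grammatical labeling of signed permutations, the change of grammar $a=P+N$, $b=PD+NA$, $c=AD$, $d=A+D$ produces $G'$, induction yields the expansion with coefficients $\xi(n,k),\zeta(n,k)$ obeying the recurrence system~\eqref{System-recu005}, and the identification $\xi(n,k)=T(n,2k+1)$, $\zeta(n,k)=2T(n,2k+2)$ comes from matching the differential recursion of $f_n(x)=2\xi_n(x^2)+x\zeta_n(x^2)$ with that of the normalized up-down-run polynomials $\widehat{T}_n(x)$, where $T_n(x)=\tfrac{x}{2}\widehat{T}_n(x)$. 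What your route buys: it is explicit and per-permutation---each unsigned $\sigma$ contributes a single binomial term governed by $\udrun(\sigma)$---so the appearance of $T(n,k)$ is explained bijectively rather than by matching recurrences, and you obtain the refinement $A_j=B_{j+1}$ as a byproduct, which the paper does not state. What the paper's route buys: the recurrence system for $\xi_n,\zeta_n$ falls out of the grammar for free, and it is precisely this system (Proposition~\ref{recusys}) that drives the subsequent real-rootedness and interlacing results of Theorem~\ref{zeros} via the Hermite--Biehler theorem, which your argument would not produce without additional work.
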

As illustrations of Theorem~\ref{thm055}, we have $b_1(x,y)=1+y,~b_2(x,y)=(1 + y + x y + x y^2)+(2 y + 2 x y)$,
$$b_3(x,y)=(1 + y + 10 x y + 10 x y^2 + x^2 y^2 + x^2 y^3)+(6 y + 6 x y + 6 x y^2 + 6 x^2 y^2).$$
In Theorem~\ref{thm05}, we shall establish a strong connection
between the joint distribution of $(\operatorname{des}_A,\operatorname{des}_B,\negg)$ over $\msn^B$ and the joint distribution of
$(\lap,\ap,\even)$ over restricted Stirling permutations.

The study of Stirling permutations originated from the work of Ramanujan~\cite{Ramanujan27}, when he considered the Taylor series expansion:
\begin{equation*}\label{eqr}
\mathrm{e}^{nx}=\sum_{r=0}^n\frac{(nx)^r}{r!}+\frac{(nx)^n}{n!}S_n(x).
\end{equation*}
He claimed that
$S_n(1)=\frac{n!}{2}\left(\frac{\mathrm{e}}{n}\right)^n-\frac{2}{3}+\frac{4}{135n}+O(n^{-2})$,
which was independently proved in 1928 by Szeg\"o and Watson.
Buckholtz~\cite{Buckholtz} found that
$$S_n(x)=\sum_{r=0}^{k-1}\frac{1}{n^r}U_r(x)+O(n^{-k}),$$
where $$U_r(x)=(-1)^r\left(\frac{x}{1-x}\frac{\mathrm{d}}{\mathrm{d}x}\right)^r\frac{x}{1-x}=(-1)^r\frac{C_r(x)}{(1-x)^{2r+1}},$$
and $C_r(x)$ is a polynomial of degree $r$. Let $\Stirling{n}{k}$ be the
{\it Stirling number of the second kind}, i.e.,
the number of set partitions of $[n]$ into $k$ blocks.
In~\cite{Carlitz65}, Carlitz discovered that
$$\sum_{k=0}^\infty \Stirling{n+k}{k}x^k=\frac{C_n(x)}{(1-x)^{2n+1}}.$$
The polynomials $C_n(x)$ are now known as the {\it second-order Eulerian polynomials}.

Let $[\mathbf{n}]_2$ denote the multiset $\{1^{2},2^{2},\ldots,n^{2}\}$, where each element $i$ appears $2$ times.
We say that the multipermutation $\sigma$ of $[\mathbf{n}]_2$ is a {\it Stirling permutation}
if $\sigma_i=\sigma_j$, then $\sigma_s>\sigma_i$ for all $i<s<j$.
Let $\mq_n$ denote the set of all Stirling permutations of $[\mathbf{n}]_2$. For example, $\mq_2=\{1122,1221,2211\}$.
Gessel-Stanley~\cite{Gessel78} discovered that $C_n(x)$ are the descent polynomials over all Stirling permutations in $\mq_n$.
Recently, the theory of Stirling permuations
becomes an active research domain, see~\cite{Brualdi20,Janson11,Lin21,Liu21,Ma23}. There are several variants of Stirling permutations,
including Stirling permutations of a general multiset~\cite{Kuba21} and quasi-Stirling permutations~\cite{Elizalde}.

For $\sigma\in\mqn$, except where explicitly stated, we always assume that $\sigma_0=\sigma_{2n+1}=0$.
Let
\begin{align*}
\operatorname{Asc}(\sigma)&=\{\sigma_i\mid \sigma_{i-1}<\sigma_{i}\},~
\operatorname{Plat}(\sigma)=\{\sigma_i\mid \sigma_i=\sigma_{i+1}\},~
\operatorname{Des}(\sigma)=\{\sigma_i\mid \sigma_i>\sigma_{i+1}\},\\
\operatorname{Lap}(\sigma)&=\{\sigma_i\mid \sigma_{i-1}<\sigma_{i}=\sigma_{i+1}\},~
\operatorname{Rpd}(\sigma)=\{\sigma_i\mid \sigma_{i-1}=\sigma_i>\sigma_{i+1}\},\\
\operatorname{Eud}(\sigma)&=\{\sigma_i\mid \sigma_{i-1}<\sigma_i=\sigma_j>\sigma_{j+1},~i<j\},~
\operatorname{Apd}(\sigma)=\{\sigma_i\mid \sigma_{i-1}<\sigma_i=\sigma_{i+1}>\sigma_{i+1}\},\\
\operatorname{Vv}(\sigma)&=\{\sigma_i\mid \sigma_{i-1}>\sigma_i<\sigma_{i+1}, \sigma_{j-1}>\sigma_{j}<\sigma_{j+1},~\sigma_{i}=\sigma_j,~i<j-2\}
\end{align*}
be the sets of ascents, plateaux, descents, left ascent-plateaux, right plateau-descents, exterior up-down-pairs,
ascent-plateau-descents, valley-valley pairs of $\sigma$, respectively.
We use $\asc(\sigma)$, $\plat(\sigma)$, $\des(\sigma)$, $\lap(\sigma)$, $\operatorname{rpd}(\sigma)$, $\operatorname{eud}(\sigma)$, $\operatorname{apd}(\sigma)$ and  $\operatorname{vv}(\sigma)$ to respectively denote the number of ascents, plateaux, descents, left ascent-plateaux, right plateau-descents,
exterior up-down-pairs, ascent-plateau-descents, valley-valley pairs of $\sigma$.
The statistic $\operatorname{vv}$ is a new statistic.
It should be noted that if $\sigma_i$ is a valley-valley pair value, then the two copies of $\sigma_i$ are both valleys.

It is now well known that
$C_n(x)=\sum_{\sigma\in\mqn}x^{\asc{(\sigma)}}=\sum_{\sigma\in\mqn}x^{\plat{(\sigma)}}=\sum_{\sigma\in\mqn}x^{\des{(\sigma)}}$.
The {\it trivariate second-order Eulerian polynomial} is defined by
\begin{align*}
C_n(x,y,z)&=\sum_{\sigma\in\mqn}x^{\asc{(\sigma)}}y^{\plat{(\sigma)}}z^{\des(\sigma)}.
\end{align*}
The study of $C_n(x,y,z)$ was initiated by Dumont~\cite{Dumont80}, who showed that
\begin{equation}\label{Dumont80}
C_{n+1}(x,y,z)=xyz\left(\frac{\partial}{\partial x}+\frac{\partial}{\partial y}+\frac{\partial}{\partial z}\right)C_n(x,y,z),
\end{equation}
which implies that $C_n(x,y,z)$ is symmetric in its variables.
B\'ona~\cite{Bona08} independently found that the plateau statistic $\plat$ is equidistributed with the descent statistic $\des$ over $\mqn$.
The symmetry of the joint distribution $(\asc,\des,\plat)$ was rediscovered by Janson~\cite[Theorem~2.1]{Janson08}. In~\cite{Haglund12}, Haglund-Visontai introduced a refinement of $C_n(x,y,z)$ by indexing each ascent,
descent and plateau according to the values where they appear.
Recently, using the theory of context-free grammars, Chen-Fu~\cite{Chen21}
found the following result.
\begin{proposition}
The trivariate polynomial $C_n(x,y,z)$ is $e$-positive, i.e.,
\begin{equation}\label{Cnxyz}
C_{n}(x,y,z)=\sum_{i+2j+3k=2n+1}\gamma_{n,i,j,k}(x+y+z)^{i}(xy+yz+zx)^{j}(xyz)^k,
\end{equation}
where the coefficient $\gamma_{n,i,j,k}$ equals the number of 0-1-2-3 increasing plane trees
on $[n]$ with $k$ leaves, $j$ degree one vertices and $i$ degree two vertices.
\end{proposition}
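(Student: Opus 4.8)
The plan is to run an induction on $n$ driven entirely by Dumont's operator identity~\eqref{Dumont80}, but transported from the monomial basis $\{x^ay^bz^c\}$ to the elementary-symmetric basis $\{e_1^ie_2^je_3^k\}$, where $e_1=x+y+z$, $e_2=xy+yz+zx$ and $e_3=xyz$. Since $C_n(x,y,z)$ is symmetric in its three variables and the ring of symmetric polynomials in three variables is freely generated by $e_1,e_2,e_3$, there is a unique expansion $C_n=\sum_{i,j,k}\gamma_{n,i,j,k}\,e_1^ie_2^je_3^k$; the content of the proposition is that these coefficients are nonnegative integers counting the stated trees. I would establish this by showing that $\gamma_{n,i,j,k}$ and the tree numbers satisfy the same recurrence and the same initial data.

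First I would compute the action of $D:=e_3\left(\frac{\partial}{\partial x}+\frac{\partial}{\partial y}+\frac{\partial}{\partial z}\right)$ on an $e$-monomial. Writing $\nabla=\partial_x+\partial_y+\partial_z$, the elementary identities $\nabla e_1=3$, $\nabla e_2=2e_1$ and $\nabla e_3=e_2$ together with the Leibniz rule give
\begin{equation*}
D\!\left(e_1^ie_2^je_3^k\right)=3i\,e_1^{i-1}e_2^{j}e_3^{k+1}+2j\,e_1^{i+1}e_2^{j-1}e_3^{k+1}+k\,e_1^{i}e_2^{j+1}e_3^{k}.
\end{equation*}
Applying $D$ to $C_n=\sum\gamma_{n,i,j,k}e_1^ie_2^je_3^k$ and extracting the coefficient of $e_1^Ie_2^Je_3^K$ in $C_{n+1}=D\,C_n$ yields the three-term recurrence
\begin{equation*}
\gamma_{n+1,I,J,K}=3(I+1)\gamma_{n,I+1,J,K-1}+2(J+1)\gamma_{n,I-1,J+1,K-1}+K\,\gamma_{n,I,J-1,K}.
\end{equation*}
Every output monomial carries a nonnegative coefficient, and $D$ raises the total degree in $x,y,z$ by exactly $2$, so the filtration $i+2j+3k=2n+1$ is preserved and the recurrence already exhibits $e$-positivity once the base case is handled.

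Next I would reproduce the identical recurrence for the tree numbers $t_{n,i,j,k}$, the count of $0$-$1$-$2$-$3$ increasing plane trees on $[n]$ with $i$ degree-two, $j$ degree-one and $k$ degree-zero (leaf) vertices, via insertion of the largest label. Every such tree on $[n+1]$ arises uniquely from one on $[n]$ by attaching the new vertex $n+1$ as a child of some vertex $v$ of out-degree at most $2$, and the plane (ordered) structure supplies exactly one, two or three admissible slots according as $v$ has current out-degree $0,1$ or $2$. Attaching to a leaf contributes the factor $K$ and the transition $(I,J-1,K)\mapsto(I,J,K)$; attaching to a degree-one vertex contributes $2(J+1)$ and $(I-1,J+1,K-1)\mapsto(I,J,K)$; attaching to a degree-two vertex contributes $3(I+1)$ and $(I+1,J,K-1)\mapsto(I,J,K)$, which is precisely the recurrence above. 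Since $\mq_1=\{11\}$ gives $C_1=xyz=e_3$ and the unique tree on $[1]$ is a single leaf, both sequences begin with $\gamma_{1,0,0,1}=t_{1,0,0,1}=1$ and all other entries zero, so induction forces $\gamma_{n,i,j,k}=t_{n,i,j,k}$ for all $n$. As a consistency check, combining the vertex count $i+j+k+m=n$ and the edge count $j+2i+3m=n-1$ (with $m$ the number of degree-three vertices) gives $i+2j+3k=2n+1$, recovering the index constraint in~\eqref{Cnxyz}.

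The derivative computations are routine; the only step needing genuine care is the combinatorial bookkeeping of the insertion, where one must respect both the ordered (plane) structure, producing the multiplicities $1,2,3$ rather than a single slot, and the degree bound $\le 3$, which forbids attaching to a degree-three vertex. I expect this matching of the plane-tree slot counts with the integer coefficients $3i$, $2j$, $k$ emitted by $D$ to be the crux of the argument; once it is established, everything else is forced by Dumont's identity.
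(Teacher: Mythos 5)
Your proposal is correct and takes essentially the same route as the paper (following Chen--Fu): your operator $D=e_3(\partial_x+\partial_y+\partial_z)$ acting via $\nabla e_1=3$, $\nabla e_2=2e_1$, $\nabla e_3=e_2$ is exactly the change of grammar $u=x+y+z$, $v=xy+yz+zx$, $w=xyz$ that turns $G=\{x\to xyz,\ y\to xyz,\ z\to xyz\}$ into $H=\{w\rightarrow vw,\ u\rightarrow 3w,\ v\rightarrow 2uw\}$ in Section~\ref{Section4}, and your three-term recurrence for $\gamma_{n,i,j,k}$ is precisely the one the paper quotes from Chen--Fu, matched by the same largest-label insertion argument on $0$-$1$-$2$-$3$ increasing plane trees. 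The derivative computation, the preservation of the filtration $i+2j+3k=2n+1$, and the slot counts $1,2,3$ by out-degree all check out, so there is no gap.
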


In this paper, we always let $\gamma_{n,i,j,k}$ be defined by~\eqref{Cnxyz}.
It follows from~\cite[eq.~(4.9)]{Chen21} that
\begin{equation*}
\gamma_{n,i,j,k}=3(i+1)\gamma_{n-1,i+1,j,k-1}+2(j+1)\gamma_{n-1,i-1,j+1,k-1}+k\gamma_{n-1,i,j-1,k},
\end{equation*}
with $\gamma_{1,0,0,1}=1$ and $\gamma_{1,i,j,k}=0$ if $k\neq 1$.
For $n=2,3,4$, the nonzero $\gamma_{n,i,j,k}$ are listed as follows:
$$\gamma_{2,0,1,1}=1,~\gamma_{3,1,0,2}=2,~\gamma_{3,0,2,1}=1,~\gamma_{4,0,0,3}=6,~\gamma_{4,1,1,2}=8,~\gamma_{4,0,3,1}=1.$$

In~\cite{Ma23}, we introduced Stirling permutation code, and provided numerous equidistribution results as applications.
The {\it trivariate ascent-plateau polynomial} is defined by
\begin{align*}
N_n(p,q,r)&=\sum_{\sigma\in\mqn}p^{\lap(\sigma)}q^{\eudd(\sigma)}r^{\rpd(\sigma)}.
\end{align*}
From~\cite[Theorem~21]{Ma23}, one see that
\begin{equation}\label{Nnxyz02}
N_n(p,q,r)=\sum_{i+2j+3k=2n+1}3^i\gamma_{n,i,j,k}(p+q+r)^{j}(pqr)^k,
\end{equation}
One may ask the following problem.
\begin{problem}\label{pr}
Why~\eqref{Cnxyz} and~\eqref{Nnxyz02} share the same coefficients?
\end{problem}
Let
\begin{equation}\label{Q60}
Q_{n}(x,y,z,p,q,r)=\sum_{\sigma\in\mqn}x^{\asc{(\sigma)}}y^{\plat{(\sigma)}}z^{\des(\sigma)}p^{\lap(\sigma)}
q^{\eudd(\sigma)}r^{\rpd(\sigma)}.
\end{equation}
A special case of Theorem~\ref{thm01} gives an answer to Problem~\ref{pr}:
\begin{equation}\label{Q6}
Q_{n}(x,y,z,p,q,r)=\sum_{i+2j+3k=2n+1}\gamma_{n,i,j,k}({x+y+z})^i({xyp+xzq+yzr})^j({xyzpqr})^{k}.
\end{equation}

This paper is organized as follows.
In Section~\ref{Section3}, we first prove Theorem~\ref{thm055}, and then we establish
a strong connection between signed permutations and Stirling permutations.
In Section~\ref{Section4}, using $\operatorname{SP}$-codes, we study a eight-variable polynomial $Q_{n}(x,y,z,p,q,r,s,t)$ as well as a seventeen-variable polynomial,
where $Q_{n}(x,y,z,p,q,r,1,1)=Q_{n}(x,y,z,p,q,r)$, i.e., the parameter $s$ marks the ascent-plateau-descent statistic and
$t$ marks the valley-valley pair statistic.
For the seventeen-variable polynomial, we find an
expansion formula with the same coefficients as in~\eqref{Cnxyz},~\eqref{Nnxyz02} and~\eqref{Q6}. Why the coefficients
$\gamma_{n,i,j,k}$ play such a crucial role in these expansions is somewhat mysterious.
From this paper, one can see that $\operatorname{SP}$-code is the key to clarify it.
\section{Eulerian polynomials and Stirling permutations}\label{Section3}
For an alphabet $A$, let $\mathbb{Q}[[A]]$ be the rational commutative ring of formal power
series in monomials formed from letters in $A$. Following Chen~\cite{Chen93}, a {\it context-free grammar} over
$A$ is a function $G: A\rightarrow \mathbb{Q}[[A]]$ that replaces each letter in $A$ by a formal function over $A$.
The formal derivative $D_G$ with respect to $G$ satisfies the derivation rules:
$$D_G(u+v)=D_G(u)+D_G(v),~D_G(uv)=D_G(u)v+uD_G(v).$$
In the theory of grammars, there are two methods for studying combinatorics: grammatical labeling and the change of grammar,
see~\cite{Chen2102,Chen23,Dumont96,Lin21,Ma1902,Ma23} for various applications.
\subsection{Proof of Theorem~\ref{thm055}}
\hspace*{\parindent}

\begin{lemma}\label{lemma1}
If $G=\{P\rightarrow PD+NA,~N\rightarrow PD+NA,~E\rightarrow (A+D)E,~A\rightarrow 2AD,~D\rightarrow 2AD\}$,
then for $n\geqslant 1$, we have
\begin{equation}\label{PN}
D_G^{n-1}(PE+NE)|_{P=A=E=1,~N=y,~D=xy}=\sum_{\pi\in\msn^B}x^{\operatorname{des}_A(\pi)}y^{\operatorname{des}_B(\pi)}.
\end{equation}
\end{lemma}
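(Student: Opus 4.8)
The plan is to prove \eqref{PN} by \emph{grammatical labeling}: I attach to each signed permutation a monomial in $\{P,N,A,D,E\}$ so that the sum of these monomials over $\msn^B$ equals the left-hand side of \eqref{PN} before the substitution, and so that the substitution $P=A=E=1$, $N=y$, $D=xy$ recovers the weight $x^{\operatorname{des}_A(\pi)}y^{\operatorname{des}_B(\pi)}$. Concretely, given $\pi\in\msn^B$ I read the word $0\,\pi(1)\cdots\pi(n)$ and label its consecutive pairs: the first pair $(0,\pi(1))$ receives $P$ if $\pi(1)>0$ and $N$ if $\pi(1)<0$; each remaining pair $(\pi(i),\pi(i+1))$ with $1\le i\le n-1$ receives $A$ if it is an ascent and $D$ if it is a descent; and a single trailing letter $E$ marks the slot after $\pi(n)$. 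Thus every monomial contains exactly one letter from $\{P,N\}$, exactly one $E$, and $n-1$ letters from $\{A,D\}$. For $n=1$ the two elements $1,\overline 1$ give precisely $PE$ and $NE$, which is the base case.

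For the inductive step I build every element of $\msn^B$ from an element of $\mathfrak{S}_{n-1}^B$ by inserting one of the two largest-magnitude letters $n$ or $\overline n$ into one of the $n$ gaps of the word $0\,\pi(1)\cdots\pi(n-1)$, where $n$ is a global maximum and $\overline n$ a global minimum. I then check that differentiating each label reproduces exactly the relabelling caused by insertion into the corresponding gap, with the coefficient $2$ accounting for the two sign choices. Inserting into the interior of an ascent pair or a descent pair replaces it, in all four cases, by one ascent pair and one descent pair, which is the rule $A\to 2AD$, $D\to 2AD$. Inserting after $\pi(n-1)$ appends an ascent pair (for $n$) or a descent pair (for $\overline n$) while preserving the trailing slot, giving $E\to (A+D)E$. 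Finally, inserting just after the leading $0$ replaces the first pair by a new first pair together with a new interior pair: $n$ yields a positive first entry and an interior descent, while $\overline n$ yields a negative first entry and an interior ascent, which is exactly $P\to PD+NA$ and $N\to PD+NA$. Since the $n$ letters of a size-$(n-1)$ monomial are in bijection with the $n$ gaps and each derivative term is the monomial of the corresponding inserted permutation, $D_G$ sends the size-$(n-1)$ sum to the size-$n$ sum, completing the induction.

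It remains to evaluate. Because $\pi(0)=0$, the first pair is a descent exactly when $\pi(1)<0$, and this descent contributes to $\operatorname{des}_B$ but never to $\operatorname{des}_A$, whereas every interior descent contributes to both; hence under the substitution the monomial of $\pi$ becomes $y^{[\pi(1)<0]}(xy)^{\operatorname{des}_A(\pi)}=x^{\operatorname{des}_A(\pi)}y^{\operatorname{des}_B(\pi)}$, and summing over $\msn^B$ gives \eqref{PN}. I expect the main obstacle to be the bookkeeping in the inductive step: one must verify, for each of the three slot types and each sign, that the new first-pair label, the new interior label, and the persistence of $E$ all agree with the grammar, and that the product rule for $D_G$ distributes over the monomial so as to enumerate the gaps without over- or undercounting. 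The first-pair cases $P,N$ are the most delicate, since a single insertion there simultaneously rewrites the first pair and creates a new interior pair whose type depends on the sign of the inserted letter.
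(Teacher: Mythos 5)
Your proposal is correct and follows essentially the same route as the paper: the identical grammatical labeling of $0\,\pi(1)\cdots\pi(n)$ with $P/N$ at the front, $A/D$ between consecutive entries, and a trailing $E$, followed by the observation that inserting $n$ or $\overline{n}$ into a gap enacts exactly one substitution rule of $G$, with the factor $2$ recording the two sign choices. Your case analysis (in particular the first-gap case $P\to PD+NA$, $N\to PD+NA$, and the identity $\operatorname{des}_B=\operatorname{des}_A+[\pi(1)<0]$ justifying the substitution) merely spells out what the paper dismisses as ``routine to verify'' via a single example.
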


\begin{proof}
Given $\pi\in\msn^B$.
We first give a grammatical labeling of $\pi$ as follows:
\begin{itemize}
  \item [$(i)$] We put a superscript $P$ just before $\pi(1)$ if $\pi(1)>0$,
while we put a superscript $N$ just before $\pi(1)$ if $\pi(1)<0$;
  \item [$(ii)$] For $1\leqslant i\leqslant n-1$, we put a superscript
$A$ right after $\pi(i)$ if $\pi(i)<\pi(i+1)$, while we put a superscript $D$ right after $\pi(i)$ if $\pi(i)>\pi(i+1)$;
  \item [$(iii)$] Put a superscript $E$ at the end of $\pi$.
\end{itemize}
With this labeling, the weight of $\pi$ is defined as the product of its labels.
Note that the sum of weights of elements in $\ms_1^B$ is given by $PE+NE$.
In general, it is routine to verify that the insertion of $n$ or $\overline{n}$ corresponds to one substitution rule given by $G$.
For example, let $\pi=\overline{2}3\overline{1}54$. The labeling of $\pi $ is given by
$^N\overline{2}^A3^D\overline{1}^A5^D4^E$.
If we insert $6$ or $\overline{6}$ into $\pi$, we get
$$^P6^D\overline{2}^A3^D\overline{1}^A5^D4^E,~^N\overline{6}^A\overline{2}^A3^D\overline{1}^A5^D4^E,~^N\overline{2}^A6^D3^D\overline{1}^A5^D4^E,~^N\overline{2}^D\overline{6}^A3^D\overline{1}^A5^D4^E,$$
$$^N\overline{2}^A3^A6^D\overline{1}^A5^D4^E,~^N\overline{2}^A3^D\overline{6}^A\overline{1}^A5^D4^E,~^N\overline{2}^A3^D\overline{1}^A6^D5^D4^E,~^N\overline{2}^A3^D\overline{1}^D\overline{6}^A5^D4^E,$$
$$^N\overline{2}^A3^D\overline{1}^A5^A6^D4^E,~^N\overline{2}^A3^D\overline{1}^A5^D\overline{6}^A4^E,~^N\overline{2}^A3^D\overline{1}^A5^D4^A6^E,~^N\overline{2}^A3^D\overline{1}^A5^D4^D\overline{6}^E.$$
In conclusion, the action of the formal derivative $D_G$
on the set of weighted signed permutations in $\ms_{n}^B$ gives the set of weighted signed permutations in $\ms_{n+1}^B$. Substituting $P=A=E=1,~N=y$ and $D=xy$,
we get the desired result.
\end{proof}

\noindent{\bf A proof
Theorem~\ref{thm055}:}
\begin{proof}
Let $G$ be the grammar given by Lemma~\ref{lemma1}.
Consider a change of $G$. Setting $P+N=a,~b=PD+NA,~c=AD$ and $d=A+D$, we see that
$$D_G(a)=2b,~D_G(b)=2ac+bd,~D_G(c)=2cd,~D_G(d)=4c,~D_G(E)=dE.$$
Thus we get a new grammar:
$G'=\{a\rightarrow 2b,~b\rightarrow 2ac+bd,~c\rightarrow 2cd,~d\rightarrow 4c,~E\rightarrow dE\}$.
Note that $D_{G'}(aE)=adE+2bE$ and $D_{G'}^2(aE)=aE(d^2+8c)+bE(6d)$.
For $n\geqslant 2$, by induction, we find that there exist nonnegative integers $\xi({n,k})$ and $\zeta({n,k})$ such that
\begin{equation}\label{System-recu004}
D_{G'}^{n-1}(aE)=aE\sum_{k\geqslant 0}4^k\xi({n,k})c^kd^{n-1-2k}+bE\sum_{k\geqslant 0}4^k\zeta({n,k})c^kd^{n-2-2k}.
\end{equation}
Using $D_{G'}^{n}(aE)=D_{G'}\left(D_{G'}^{n-1}(aE)\right)$, it is easy to verify that
\begin{equation}\label{System-recu005}
\left\{
  \begin{array}{ll}
    \xi({n+1,k})=(1+2k)\xi(n,k)+(n-2k+1)\xi(n,k-1)+\frac{1}{2}\zeta(n,k-1), & \\
    \zeta({n+1,k})=2(1+k)\zeta({n,k})+(n-2k)\zeta({n,k-1})+2\xi(n,k), &
  \end{array}
\right.
\end{equation}
with $\xi({1,0})=1$ and $\xi({1,k})=0$ if $k\neq 0$ and $\zeta({1,k})=0$ for any $k$.
In~\eqref{System-recu004}, upon taking $a=1+y,~b=xy+y,~E=1,~c=xy$ and $d=1+xy$, we get the desired expansion formula.

Define
\begin{equation}\label{xi}
\xi_n(x)=\sum_{k\geqslant 0}\xi(n,k)x^k,~\zeta_n(x)=\sum_{k\geqslant 0}\zeta(n,k)x^{k},~f_n(x)=2\xi_n(x^2)+x\zeta_n(x^2).
\end{equation}
Let $T_n(x)=\sum_{\pi\in\msn}x^{\udrun(\pi)}=\sum_{k=1}^nT(n,k)x^k$.
The polynomials $T_n(x)$ satisfy the recursion
\begin{equation}\label{Rnx02}
T_{n+1}(x)=x(nx+1)T_{n}(x)+x\left(1-x^2\right)\frac{\mathrm{d}}{\mathrm{d}x}T_{n}(x),
\end{equation}
with $T_0(x)=1$ and $T_1(x)=x$ (see~\cite{Ma22,Stanley08}).
Based on empirical evidence,
we set
$T_n(x)=\frac{x}{2}\widehat{T}_n(x)$
for $n\geqslant 1$. It follows from~\eqref{Rnx02} that
$\widehat{T}_{n+1}(x)=(1+x+(n-1)x^2)\widehat{T}_{n}(x)+x\left(1-x^2\right)\frac{\mathrm{d}}{\mathrm{d}x}\widehat{T}_{n}(x)$.
From~\eqref{fnzeros}, we see that $\widehat{T}_n(x)$ satisfy the same recurrence relation and initial conditions as $f_n(x)$, so they agree.
Therefore, we deduce that $$T_n(x)=\frac{x}{2}f_n(x)=\frac{x}{2}\left(2\xi_n(x^2)+x\zeta_n(x^2)\right),$$
which yields $\xi(n,k)=T(n,2k+1)$ and $\zeta(n,k)=2T(n,2k+2)$. This completes the proof.
\end{proof}
\subsection{Real-rooted polynomials}
\hspace*{\parindent}

Let $\xi_n(x)$ and $\zeta_n(x)$ be defined by~\eqref{xi}.
Multiplying both sides of~\eqref{System-recu005} by $x^i$ and summing over all $i$, we arrive at the following result.
\begin{proposition}\label{recusys}
Let $\xi_1(x)=1$ and $\zeta_1(x)=0$. Then we have
$$\left\{
  \begin{array}{ll}
    \xi_{n+1}(x)=(1+(n-1)x)\xi_{n}(x)+2x(1-x)\frac{\mathrm{d}}{\mathrm{d}x}\xi_n(x)+\frac{x}{2}\zeta_n(x), & \\
    \zeta_{n+1}(x)=\left(2+(n-2)x\right)\zeta_n(x)+2x(1-x)\frac{\mathrm{d}}{\mathrm{d}x}\zeta_n(x)+2\xi_n(x). &
  \end{array}
\right.$$
\end{proposition}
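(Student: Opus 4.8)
The plan is to read Proposition~\ref{recusys} as nothing more than the generating-function shadow of the array recurrence~\eqref{System-recu005}: I would multiply each of the two scalar recurrences by $x^k$ and sum over all $k\geqslant 0$, then translate each type of term on the right-hand side into an operation on the polynomials $\xi_n(x)$ and $\zeta_n(x)$ defined in~\eqref{xi}. Three elementary dictionary entries do all the work. A bare term $a(n,k)$ sums to the polynomial itself; a term of the form $k\,a(n,k)$ sums to $x\frac{\mathrm{d}}{\mathrm{d}x}$ applied to the polynomial, using $\sum_{k\geqslant 0}k\,a(n,k)x^k=x\frac{\mathrm{d}}{\mathrm{d}x}\sum_{k\geqslant 0}a(n,k)x^k$; and a shifted term $a(n,k-1)$ is handled by reindexing $j=k-1$, which pulls out a factor $x$ and, crucially, rewrites any coefficient linear in $k$ as a coefficient linear in $j$.

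First I would treat the $\xi$ recurrence. The term $(1+2k)\xi(n,k)$ contributes $\xi_n(x)+2x\xi_n'(x)$. For the shifted term $(n-2k+1)\xi(n,k-1)$ I would substitute $j=k-1$, so that the coefficient $n-2k+1$ becomes $n-1-2j$ and the sum becomes $x\bigl((n-1)\xi_n(x)-2x\xi_n'(x)\bigr)$. The last term $\tfrac12\zeta(n,k-1)$ simply contributes $\tfrac{x}{2}\zeta_n(x)$. Collecting the four pieces and grouping the derivative contributions $2x\xi_n'(x)-2x^2\xi_n'(x)=2x(1-x)\xi_n'(x)$ produces exactly the first line of the proposition. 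The $\zeta$ recurrence is entirely parallel: $2(1+k)\zeta(n,k)$ gives $2\zeta_n(x)+2x\zeta_n'(x)$, the shifted term $(n-2k)\zeta(n,k-1)$ reindexes to $x\bigl((n-2)\zeta_n(x)-2x\zeta_n'(x)\bigr)$, and the source term $2\xi(n,k)$ gives $2\xi_n(x)$; regrouping the derivative parts into $2x(1-x)\zeta_n'(x)$ yields the second line.

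There is no real analytic obstacle here; the whole statement is a bookkeeping exercise, and the only point that needs a word of care is the reindexing of the shifted sums. When I replace $k$ by $j+1$ the sum formally starts at $j=-1$, but the boundary term vanishes because $\xi(n,-1)=\zeta(n,-1)=0$ by the convention implicit in~\eqref{System-recu005}, so no spurious constant is introduced. I would also record the base case separately: from $\xi(1,0)=1$, $\xi(1,k)=0$ for $k\neq 0$, and $\zeta(1,k)=0$ for all $k$, one reads off $\xi_1(x)=1$ and $\zeta_1(x)=0$, matching the stated initial data. Since~\eqref{System-recu005} holds for every $n\geqslant 1$, summing it over $k$ gives the two differential recurrences for every $n\geqslant 1$, which completes the argument.
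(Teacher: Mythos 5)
Your proposal is correct and follows exactly the paper's route: the paper derives Proposition~\ref{recusys} precisely by multiplying the recurrence~\eqref{System-recu005} by $x^k$ and summing over $k$, which is your argument with the reindexing and derivative bookkeeping written out in full. Your explicit verification of the shifted-sum boundary terms and the base case simply fills in details the paper leaves implicit.
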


In particular,
$\xi_2(x)=1,~\zeta_2(x)=2,~\xi_3(x)=1+2x,~\zeta_3(x)=6$. Moreover, we have $\xi_n(1)=\frac{n!}{2}$ and $\zeta_n(1)=n!$.
Let $\rz$ denote the set of real polynomials with only real zeros. Furthermore, denote
by $\rz(I)$ the set of such polynomials all of whose zeros are in the interval $I$.
\begin{theorem}\label{zeros}
Let $f_n(x)=2\xi_n(x^2)+x\zeta_n(x^2)$.
The polynomials $f_n(x)$ satisfy the recursion
\begin{equation}\label{fnzeros}
f_{n+1}(x)=(1+x+(n-1)x^2)f_n(x)+x(1-x^2)\frac{\mathrm{d}}{\mathrm{d}x}f_n(x),~f_1(x)=2.
\end{equation}
Then $f_n(x)\in\rz[-1,0)$ and $f_n(x)$ interlaces $f_{n+1}(x)$.
More precisely, $f_n(x)$ has $\lrf{\frac{n-1}{2}}$ simple zeros in the interval $(-1,0)$ and the zero $x=-1$ with the multiplicity $\lrf{\frac{n}{2}}$.
Moreover, both $\xi_n(x)$ and $\zeta_n(x)$ have only real negative simple zeros,
$\zeta_{2n}(x)$ alternates left of $\xi_{2n}(x)$ and $\zeta_{2n+1}(x)$ interlaces $\xi_{2n+1}(x)$.
\end{theorem}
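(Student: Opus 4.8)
The plan is to treat the statement in three stages: (1) establish the recursion \eqref{fnzeros}; (2) prove the precise zero structure of $f_n$ by induction on $n$; and (3) transfer real-rootedness and the interlacing to $\xi_n,\zeta_n$ via the Hermite--Biehler Theorem.

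For stage (1), I would substitute $x\mapsto x^2$ into the two identities of Proposition~\ref{recusys} and expand $f_{n+1}(x)=2\xi_{n+1}(x^2)+x\zeta_{n+1}(x^2)$. Grouping the resulting terms according to whether they carry $\xi_n$, $\xi_n'$, $\zeta_n$ or $\zeta_n'$ (all evaluated at $x^2$) and matching them against $(1+x+(n-1)x^2)f_n(x)+x(1-x^2)f_n'(x)$ verifies \eqref{fnzeros}; this is a bookkeeping computation in which the would-be leading terms cancel, and the base case $f_1(x)=2$ is immediate. Along the way I would record, from the identification $f_n(x)=\tfrac{2}{x}T_n(x)$ obtained in the proof of Theorem~\ref{thm055}, that $f_n$ has degree $n-1$, nonnegative coefficients, positive leading coefficient $2T(n,n)$, and $f_n(0)=2$.

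Stage (2) is the heart of the argument. The inductive hypothesis is that $f_n$ has a zero of order $\lrf{n/2}$ at $x=-1$ and $\lrf{(n-1)/2}$ simple zeros $-1<r_1<\cdots<r_p<0$, with $f_n>0$ on $(r_p,0]$. Applying \eqref{fnzeros}, at each interior zero one gets $f_{n+1}(r_i)=r_i(1-r_i^2)f_n'(r_i)$; since $r_i(1-r_i^2)<0$ and the signs $f_n'(r_i)$ alternate, $f_{n+1}$ changes sign across each gap $(r_i,r_{i+1})$ and across $(r_p,0)$ (using $f_{n+1}(0)=f_n(0)=2>0$), producing simple zeros there. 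Near $x=-1$, writing $f_n=(1+x)^{\lrf{n/2}}h$ with $h(-1)\neq0$, \eqref{fnzeros} gives $f_{n+1}=(1+x)^{\lrf{n/2}}B$ with $B(-1)=(n-1-2\lrf{n/2})\,h(-1)$; this vanishes precisely when $n$ is odd, raising the multiplicity of $-1$ to $\lrf{(n+1)/2}$, while for $n$ even it is nonzero and a sign comparison between $B(-1)$ and $f_{n+1}(r_1)$ forces one extra sign change in $(-1,r_1)$. The count then closes by degrees: $\deg f_{n+1}=n$ equals $\lrf{(n+1)/2}+\lrf{n/2}$, so the zeros located above exhaust all zeros; hence all lie in $[-1,0)$, the interior ones are simple, the multiplicity at $-1$ is exactly $\lrf{(n+1)/2}$, and, since the interior zeros of $f_n$ and $f_{n+1}$ strictly separate one another while $-1$ is common, $f_n$ interlaces $f_{n+1}$. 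The delicate point, and the main obstacle, is exactly the behavior at $x=-1$: one must simultaneously rule out spurious zeros in $(-1,r_1)$ in the odd case and prevent the multiplicity from exceeding $\lrf{(n+1)/2}$, and it is the degree count, rather than a direct evaluation of $B'(-1)$, that pins both down cleanly.

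For stage (3), since every zero of $f_n$ lies in $[-1,0)$, all zeros of $f_n$ have negative real part, and its even and odd parts are $2\xi_n(x^2)$ and $x\zeta_n(x^2)$. The Hermite--Biehler Theorem then yields that $\xi_n$ and $\zeta_n$ have only simple negative real zeros that interlace, the required common sign of leading coefficients being guaranteed by $\xi(n,k)=T(n,2k+1)\geq0$ and $\zeta(n,k)=2T(n,2k+2)\geq0$. Finally, the orientation of the interlacing is fixed by the relative degrees: from these formulas $\deg\xi_{2m}=\deg\zeta_{2m}=m-1$ whereas $\deg\xi_{2m+1}=m=\deg\zeta_{2m+1}+1$, which is exactly the distinction between ``$\zeta_{2n}$ alternates left of $\xi_{2n}$'' (equal degrees) and ``$\zeta_{2n+1}$ interlaces $\xi_{2n+1}$'' (degrees differing by one).
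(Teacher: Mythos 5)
Your proposal is correct, and its core takes a genuinely more self-contained route than the paper. The published proof has the same three-stage skeleton but outsources the two hardest steps to earlier work: the recursion \eqref{fnzeros} is obtained by setting $a_n(x)=2\xi_n(x)$, rewriting the system of Proposition~\ref{recusys}, and citing \cite[Theorem~3]{Ma22} (a general result on Eulerian recurrence systems), while the entire zero structure --- the $\lrf{(n-1)/2}$ simple zeros in $(-1,0)$, the zero at $x=-1$ of multiplicity $\lrf{n/2}$, and the interlacing of $f_n$ with $f_{n+1}$ --- is quoted wholesale from \cite[Theorem~2]{Ma08}. You instead verify the recursion by direct substitution into Proposition~\ref{recusys} (this does reduce to bookkeeping: the $\xi_n$, $\zeta_n$ and derivative terms match group by group, and the top-degree terms cancel as you say), and you reprove the zero structure by induction: the evaluation $f_{n+1}(r_i)=r_i(1-r_i^2)f_n'(r_i)$ at the interior zeros, the factorization $f_n=(1+x)^{\lrf{n/2}}h$ giving $B(-1)=\left(n-1-2\lrf{n/2}\right)h(-1)$, and the degree count $\lrf{(n+1)/2}+\lrf{n/2}=n$ to rule out spurious zeros and excess multiplicity at $-1$. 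I checked these computations, including the parity analysis at $-1$ and the sign comparison forcing the extra zero in $(-1,r_1)$ when $n$ is even, and they are sound; the only cosmetic point is the $p=0$ base cases (e.g.\ $n=2$), where the comparison must be made against $f_{n+1}(0)=2>0$ rather than a nonexistent $r_1$, a trivial adaptation. What each approach buys: the paper's version is shorter and correctly scoped, since \cite[Theorem~2]{Ma08} is engineered for exactly recursions of the shape \eqref{fnzeros}, whereas yours makes the theorem verifiable without consulting either external reference --- a real gain given how terse the published proof is. Stage (3) coincides with the paper's, including the same mild gloss in both: Wagner's quoted Hermite--Biehler statement gives only nonpositive zeros and the relation $\zeta_n\prec 2\xi_n$, and the strict conclusions (simple, strictly negative zeros, oriented interlacing) require the strict form of Hermite--Biehler; your observation that every zero of $f_n$ lies in the open left half-plane, together with $\xi_n(0)=T(n,1)=1>0$ and $\zeta_n(0)=2T(n,2)>0$, is precisely what licenses this, and your degree computation $\deg\xi_{2m}=\deg\zeta_{2m}=m-1$ and $\deg\xi_{2m+1}=\deg\zeta_{2m+1}+1$ correctly fixes the dichotomy between ``alternates left of'' and ``interlaces.''
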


In the sequel, we shall prove Theorem~\ref{zeros}.
Following~\cite{Wagner96},
we say that a polynomial $p(x)\in\R[x]$ is {\it standard} if its leading coefficient is positive.
Suppose that $p(x),q(x)\in\rz$.
The zeros of $p(x)$ are $\xi_1\leqslant\cdots\leqslant\xi_n$,
and that those of $q(x)$ are $\theta_1\leqslant\cdots\leqslant\theta_m$.
We say that $p(x)$ {\it interlaces} $q(x)$ if $\deg q(x)=1+\deg p(x)$ and the zeros of
$p(x)$ and $q(x)$ satisfy
$\theta_1\leqslant\xi_1\leqslant\theta_2\leqslant\cdots\leqslant\xi_n\leqslant\theta_{n+1}$.
We say that $p(x)$ {\it alternates left of} $q(x)$ if $\deg p(x)=\deg q(x)$
and the zeros of them satisfy
$
\xi_1\leqslant\theta_1\leqslant\xi_2\leqslant\cdots\leqslant\xi_n
\leqslant\theta_n$.
We use the notation $p(x)\prec q(x)$ for
either $p(x)$ interlaces $q(x)$ or $p(x)$ alternates left of $q(x)$.
A complex coefficients polynomial $p(x)$ is {\it weakly Hurwitz stable} if every zero of $p(x)$
is in the closed left half of the complex plane.
The following version of the
Hermite-Biehler theorem will be used in the proof of Theorem~\ref{zeros}.
\begin{HBtheorem}[{\cite[Theorem~3]{Wagner96}}]\label{Hermite}
Let $f(x)=f^E(x^2)+xf^O(x^2)$ be a standard polynomial with real coefficients.
Then $f(x)$ is weakly Hurwitz stable if and only if both $f^E(x)$ and $f^O(x)$ are standard,
have only nonpositive zeros, and $f^O(x)\prec f^E(x)$.
\end{HBtheorem}

\noindent{\bf A proof
Theorem~\ref{zeros}:}
\begin{proof}
Setting $a_n(x)=2\xi_{n}(x)$, it follows from Proposition~\ref{recusys} that
$$\left\{
  \begin{array}{ll}
    a_{n+1}(x)=(1+(n-1)x)a_{n}(x)+2x(1-x)\frac{\mathrm{d}}{\mathrm{d}x}a_n(x)+x\zeta_n(x), & \\
    \zeta_{n+1}(x)=\left(2+(n-2)x\right)\zeta_n(x)+2x(1-x)\frac{\mathrm{d}}{\mathrm{d}x}\zeta_n(x)+a_n(x).&
  \end{array}
\right.$$
Since $f_n(x)=2\xi_n(x^2)+x\zeta_n(x^2)=a_n(x^2)+x\zeta_n(x^2)$, then the recursion~\eqref{fnzeros} follows from~\cite[Theorem~3]{Ma22}.
When $n\geqslant 2$, it is clear that $f_n(0)=a_{n}(0)=a_1(0)=2$, $\deg (a_n(x))=\lrf{\frac{n-1}{2}}$ and $\deg (b_n(x))=\lrf{\frac{n-2}{2}}$.
By~\cite[Theorem~2]{Ma08}, we see that $f_n(x)$ interlaces $f_{n+1}(x)$, $f_n(x)$ has $\lrf{\frac{n-1}{2}}$ simple zeros in the interval $(-1,0)$ and the zero $x=-1$ with the multiplicity $\lrf{\frac{n}{2}}$.
It follows from Hermite-Biehler Theorem that both $a_n(x)$ and $\zeta_n(x)$ have only real negative simple zeros,
$\zeta_{2n}(x)$ alternates left of $a_{2n}(x)$ and $\zeta_{2n+1}(x)$ interlaces $a_{2n+1}(x)$. This completes the proof.
\end{proof}
\subsection{Relationship between signed permutations and Stirling permutations}
\hspace*{\parindent}

Let $\mqn^{(1)}=\{\sigma\in\mqn \mid \text{the two copies of $1$ are contiguous elements in $\sigma$}\}$. For example,
$$\mq_3^{(1)}=\{112233,~112332,~113322,~331122,~221133,~223311,~233211,~332211\}.$$
Denote by $\#C$ the cardinality of the set $C$.
Given $\sigma\in\mqn^{(1)}$.
Let us examine how to generate an element in $\mq_{n+1}^{(1)}$ by inserting two copies of $n+1$.
Note that there are $2n$ possibilities. Then $\#\mq_{n+1}^{(1)}=2n\#\mq_{n}^{(1)}=2^nn!$, since $\mq_1^{(1)}=\{11\}$.
Recall that $\#\msn^B=2^nn!$. It is natural to explore the relationship
between signed permutations and restricted Stirling permutation.

We need some notations.
For $\sigma\in\mqn$, a value $\sigma_i$ is called
\begin{itemize}
  \item an {\it ascent-plateau} if $\sigma_{i-1}<\sigma_{i}=\sigma_{i+1}$, where $2\leqslant i\leqslant 2n-1$;
  \item  a {\it left ascent-plateau} if $\sigma_{i-1}<\sigma_{i}=\sigma_{i+1}$, where $1\leqslant i\leqslant 2n-1$ and $\sigma_0=0$.
\end{itemize}
Let $\ap(\sigma)$ (resp.~$\lap(\sigma)$) be the number of ascent-plateaux (resp.~ left ascent-plateaux) in $\sigma$.
The {\it ascent-plateau polynomials} (also called {\it $1/2$-Eulerian polynomials}) $M_n(x)$~\cite{Ma19,Savage1202}
and the {\it left ascent-plateau polynomials} $\widetilde{M}_n(x)$~\cite{Liu21,Ma19} can be defined as follows:
\begin{equation*}
\begin{split}
M_n(x)&=\sum_{\sigma\in\mqn}x^{\ap(\sigma)},~
\widetilde{M}_n(x)=\sum_{\sigma\in\mqn}x^{\lap(\sigma)}.
\end{split}
\end{equation*}
From~\cite[Proposition~1]{MaYeh17}, we see that
\begin{equation*}\label{Convo01}
\begin{split}
2^nxA_n(x)&=\sum_{i=0}^n\binom{n}{i}\widetilde{M}_i(x)\widetilde{M}_{n-i}(x),~
B_n(x)=\sum_{i=0}^n\binom{n}{i}M_i(x)\widetilde{M}_{n-i}(x).
\end{split}
\end{equation*}

Let $\mq_{n+1}^{(0)}$ be the set of all Stirling permutations of the multiset $\{1,2^2,3^2,\ldots,n^2,(n+1)^2\}$, where only the element 1 appears one times.
Note that $\mqn^{(0)}\cong \mqn^{(1)}$. So $\#\mq_{n+1}^{(0)}=\#\mq_{n+1}^{(1)}=2^nn!$. For $\sigma\in\mq_{n+1}^{(0)}$,
we say that $\sigma_i$ is an {\it even value} if the first appearance of $\sigma_i$ occurs at an even position of $\sigma$, where $\sigma_i\in \{2,3,\ldots,n+1\}$.
Let $\even(\sigma)$ be the number of even indices in $\sigma$. For example, $\even(1\mathbf{2}2)=1,~\even (221)=0$ and $\even(2\mathbf{3}321)=1$.
\begin{theorem}\label{thm05}
For $n\geqslant 1$, we have
\begin{equation}\label{thm05result}
\sum_{\pi\in\msn^B}x^{\operatorname{des}_A(\pi)+1}y^{\operatorname{des}_B(\pi)}q^{\negg(\pi)}=\sum_{\sigma\in\mq_{n+1}^{(0)}}x^{\lap(\sigma)}y^{\ap(\sigma)}q^{\even(\sigma)}.
\end{equation}
\end{theorem}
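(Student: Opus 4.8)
The plan is to prove~\eqref{thm05result} by induction on $n$, matching the two sides through their natural insertion procedures, just as in the proof of Lemma~\ref{lemma1}. Both index sets grow by a factor of $2i$ at step $i$: a member of $\ms_{i}^B$ is obtained from one of $\ms_{i-1}^B$ by inserting $i$ or $\overline{i}$ into one of $i$ gaps, whereas a member of $\mq_{i+1}^{(0)}$ is obtained from one of $\mq_{i}^{(0)}$ by inserting the pair $(i+1)(i+1)$ into one of $2i$ slots. I would show that, under the dictionary
\[
\lap(\sigma)=\operatorname{des}_A(\pi)+1,\quad \ap(\sigma)=\operatorname{des}_B(\pi),\quad \even(\sigma)=\negg(\pi),
\]
both generating functions obey the same recurrence, with the base case $n=1$ equal to $x+xyq$ on either side (as one checks directly on $\ms_1^B=\{1,\overline{1}\}$ and $\mq_2^{(0)}=\{122,221\}$).

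For the left-hand side I would first upgrade the grammar of Lemma~\ref{lemma1} to record $\negg$, attaching a factor $q$ to every insertion of a negative letter; concretely this turns $G$ into
\[
G_q=\{P\rightarrow PD+qNA,\ N\rightarrow PD+qNA,\ A\rightarrow (1+q)AD,\ D\rightarrow (1+q)AD,\ E\rightarrow (A+qD)E\},
\]
which specializes to $G$ at $q=1$. Then $D_{G_q}^{n-1}$ of the seed $x(PE+qNE)$, under $P=A=E=1,\ N=y,\ D=xy$, produces the left-hand side of~\eqref{thm05result}, the prefactor $x$ realizing the shift $\operatorname{des}_A\mapsto\operatorname{des}_A+1$ and the $q$ on the seed accounting for the negative entry created at the first step. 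Classifying the $2i$ insertions of $\pm i$ by gap type (interior ascent, interior descent, front, back) and by sign gives the transition explicitly: interior ascent gaps always raise $\operatorname{des}_A$, interior descent gaps never do, and the front and back gaps each contribute one of each; the sign is what carries $q$, while the sign of the \emph{front} letter is exactly what toggles the position-$0$ descent, so that $\operatorname{des}_B-\operatorname{des}_A=[\pi(1)<0]$.

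The right-hand side is the delicate part. Inserting $(i+1)(i+1)$ at a slot always creates a new left ascent-plateau, but it may simultaneously destroy an existing one: if the chosen slot lies \emph{inside} a current lap-plateau, or \emph{immediately to its left}, then that plateau is split and stops being a lap. The key combinatorial claim I would isolate is that each of the $\lap(\sigma)$ lap-plateaux sterilizes exactly these two slots, on which $\lap$ is unchanged, while the remaining $2i-2\lap(\sigma)$ slots raise $\lap$ by one. This corrects the naive count (one new plateau per insertion), which already fails at $n=2$, predicting $2x+6x^2$ instead of the correct $4x+4x^2$. Crucially, the two sterilized slots occupy consecutive positions, hence split one even and one odd; since the first copy of $i+1$ lands at the position of its slot, an even slot is precisely what increments $\even$, so the parity (that is, $q$) split stays balanced inside each class. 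Finally $\ap=\lap-[\sigma_1=\sigma_2]$, and the front slot (position $1$, odd) is exactly what switches $\sigma_1=\sigma_2$ on, mirroring the front-letter sign on the left and yielding the dictionary entry $\sigma_1=\sigma_2\iff\pi(1)>0$.

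Assembling these observations, the trivariate transition for $\sum_\sigma x^{\lap(\sigma)}y^{\ap(\sigma)}q^{\even(\sigma)}$ matches term by term that for $\sum_\pi x^{\operatorname{des}_A(\pi)+1}y^{\operatorname{des}_B(\pi)}q^{\negg(\pi)}$, and induction closes the argument. I expect the main obstacle to be precisely the plateau-destruction bookkeeping on the Stirling side: proving that every lap-plateau blocks two consecutive slots, and checking that this refinement is compatible \emph{simultaneously} with the even/odd tracking of $q$ and with the front-slot control of $\ap$. Recasting all of this as a single grammatical labeling of $\mq_{n+1}^{(0)}$ --- with labels recording slot type, position parity, and the lap-status of each plateau --- that collapses by a change of grammar onto $G_q$ would give the cleanest write-up, but the combinatorial heart remains the two-slots-per-lap-plateau phenomenon.
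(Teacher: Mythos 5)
Your proposal is correct and takes essentially the same approach as the paper: your $G_q$ is literally the paper's grammar $G_1$ (your seed $x(PE+qNE)$ with $P=1,\ N=y$ agrees with their $PE+qNE$ with $P=x,\ N=xy$), and your two-sterilized-slots-per-lap-plateau analysis, with the consecutive-parity bookkeeping for $q$ and the front-slot control of $\ap$ versus $\lap$, is precisely the combinatorial content of their labeled grammar $G_2=\{\alpha\rightarrow \alpha W+q\beta\gamma,\ \gamma\rightarrow \alpha W+q\beta\gamma,\ \beta\rightarrow (1+q)\beta W,\ W\rightarrow (1+q)\beta W\}$. The paper then closes exactly as you anticipate, matching $G_1$ and $G_2$ under the dictionary $PE\Leftrightarrow \alpha W$, $NE\Leftrightarrow \beta\gamma$, $A\Leftrightarrow W$, $D\Leftrightarrow \beta$ by showing both iterated derivatives satisfy one and the same recurrence system.
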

\begin{proof}
Using the same labeling scheme for $\pi\in\msn^B$ as in the proof of Lemma~\ref{lemma1}, and attaching a weight $q$ to each negative element, one can easily verify that
 \begin{equation}\label{PN}
D_{G_1}^{n-1}(PE+qNE)|_{P=x,~A=E=1,~N=xy,~D=xy}=\sum_{\pi\in\msn^B}x^{\operatorname{des}_A(\pi)+1}y^{\operatorname{des}_B(\pi)}q^{\negg(\pi)},
\end{equation}
where the grammar $G_1$ is defined by
$G_1=\{P\rightarrow PD+qNA,~N\rightarrow PD+qNA,~E\rightarrow (A+qD)E,~A\rightarrow (1+q)AD,~D\rightarrow (1+q)AD\}$.

Given $\sigma\in\mq_{n}^{(0)}$.
We now define a labeling scheme for the Stirling permutation $\sigma$:
\begin{itemize}
  \item [$(i)$] If $\sigma_1=\sigma_2$, then $\sigma_1$ is a left ascent-plateau, and we label the two positions just before and right after $\sigma_1$ by a subscript label $\alpha$.
For any other ascent-plateau $\sigma_i$, i.e., $i\geqslant 2$ and $\sigma_{i-1}<\sigma_i=\sigma_{i+1}$,
we label the two positions just before and right after $\sigma_i$ by a label $\beta$;
  \item [$(ii)$] If $\sigma_1<\sigma_2$, then we use the superscript $\gamma$ to mark the first position (just before $\sigma_1$) and the last position (at the end of $\sigma$), and denoted by $\overbrace{\sigma_1\sigma_2\cdots \sigma_{2n}}^{\gamma}$;
  \item [$(iii)$] For any other element, we label it by a subscript $w$;
  \item [$(iv)$] We attach a superscript label $q$ to every even value.
\end{itemize}
As illustrations, the labeled elements in $\mq_{2}^{(0)}$ can be listed as follows:
\begin{equation*}
\overbrace{1\underbrace{2^q}_{\beta}2}^{\gamma},~\underbrace{2}_{\alpha}2_{w}1_{w}.
\end{equation*}
Let us examine how to generate elements in $\mq_{3}^{(0)}$ by inserting the two copies of $3$:
\begin{equation}\label{even}
\overbrace{1\underbrace{2^q}_{\beta}2}^{\gamma} \left\{
                                                             \begin{array}{ll}
                                                               \underbrace{3}_{\alpha}3_{w}1\underbrace{2^q}_{\beta}2_{w}, & \\
                                                             \overbrace{1\underbrace{3^q}_{\beta}3_{w}2^q_{w}2}^{\gamma}, &  \\
                                                                \overbrace{1_{w}2^q\underbrace{3}_{\beta}3_{w}2}^{\gamma}, &  \\
                                                              \overbrace{ 1\underbrace{2^q}_{\beta}2\underbrace{3^q}_{\beta}3}^{\gamma}, &
                                                             \end{array}
                                                           \right.
~\underbrace{2}_{\alpha}2_{w}1_{w}\rightarrow \left\{
                                                             \begin{array}{ll}
                                                               \underbrace{3}_{\alpha}3_{w}2_{w}2_{w}1_{w}, & \\
                                                             \overbrace{ 2\underbrace{3^q}_{\beta}3_{w}2_{w}1}^{\gamma}, &  \\
                                                               \underbrace{2}_{\alpha}2\underbrace{3}_{\beta}3_{w}1_{w}, &  \\
                                                               \underbrace{2}_{\alpha}2_{w}1\underbrace{3^q}_{\beta}3_{w}. &
                                                             \end{array}
                                                           \right.
\end{equation}

Note that the labels $w$ always appear even times. We can read the labels $w$ two by two from left to right. So we set $W=w^2$, where
one $w$ marks an element in odd position and the other marks a nearest element (from left to right) in even position.
Note that the sum of weights of elements in $\mq_{2}^{(0)}$ is given by
$\alpha w^2+q\beta\gamma=\alpha W+q\beta\gamma$. By induction, as illustrated by~\eqref{even},
we see that if
$$G_2=\{\alpha\rightarrow \alpha W+q\beta\gamma,~\gamma\rightarrow\alpha W+q\beta\gamma,~\beta\rightarrow (1+q)\beta W,~W\rightarrow (1+q)\beta W\},$$
then
\begin{equation}
D_{G_2}^{n-1}(\alpha W+q\beta\gamma)|_{\alpha=x,~\gamma=W=1,~\beta=xy}=\sum_{\sigma\in\mq_{n+1}^{(0)}}x^{\lap(\sigma)}y^{\ap(\sigma)}q^{\even(\sigma)}.
\end{equation}

Note that
$$D_{G_1}(PE+qNE)=PE(A+D+2qD)+qNE(2A+qA+qD),$$
$$D_{G_2}(\alpha W+q\beta\gamma)=\alpha W(W+\beta+2q\beta)+q\beta\gamma(2W+qW+q\beta).$$
When $n\geqslant 1$, by induction, it is routine to check that
$$D_{G_1}^n(PE+qNE)=PEf_{n}(A,D;q)+qNEg_{n}(A,D;q),$$
$$D_{G_2}^n(\alpha W+q\beta\gamma)=\alpha W f_{n}(W,\beta;q)+q\beta\gamma g_{n}(W,\beta;q),$$
where $f_n(x,y;q)$ and $g_n(x,y;q)$ satisfy the recurrence system:
$$\left\{
  \begin{array}{ll}
  f_{n+1}(x,y;q)=(x+y+qy)f_n(x,y;q)+D_{G_3}\left(f_n(x,y;q)\right)+qyg_n(x,y;q), & \\
  g_{n+1}(x,y;q)=(x+qx+qy)g_n(x,y;q)+D_{G_3}(g_n(x,y;q))+xf_n(x,y;q), &
  \end{array}
\right.$$
with $f_1(x,y;q)=g_1(x,y;q)=1$ and $G_3=\{x\rightarrow (1+q)xy,~y\rightarrow (1+q)xy\}$.
Therefore, if we make the substitutions: $PE\Leftrightarrow \alpha W,~NE\Leftrightarrow \beta\gamma,~A\Leftrightarrow W,~D\Leftrightarrow \beta$,
we obtain
$$D_{G_1}^{n-1}(PE+qNE)|_{P=x,~A=E=1,~N=xy,~D=xy}=D_{G_2}^{n-1}(\alpha\gamma+q\beta\delta)|_{\alpha=x,~\gamma=W=1,~\beta=xy},$$
and so we arrive at~\eqref{thm05result}. This completes the proof.
\end{proof}

A permutation is called a {\it derangement} if is has no fixed points. Let $\mdd_n$ be the set of all derangements in $\msn$.
The {\it derangement polynomials} are defined by $d_n(x)=\sum_{\pi\in\mdd_n}x^{\exc(\pi)}$.
Combining~\eqref{thm05result} and~\cite[Eq.~(14)]{Brenti94}, we discover the following result.
\begin{corollary}
We have $$\sum_{\sigma\in\mq_{n+1}^{(0)}}y^{\ap(\sigma)}(-1)^{\even(\sigma)}=(1-y)^n,~\sum_{\sigma\in\mq_{n+1}^{(0)}}y^{\ap(\sigma)-\even(\sigma)}(-1)^{\even(\sigma)}=y\left(\frac{y-1}{y}\right)^nd_n(y).$$
\end{corollary}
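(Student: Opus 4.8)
The plan is to derive both identities as one-line specializations of the master equidistribution \eqref{thm05result}, feeding the resulting signed-permutation sums into Brenti's evaluations \cite[Eq.~(14)]{Brenti94}. The first step is to kill the markers $x^{\operatorname{des}_A(\pi)+1}$ and $x^{\lap(\sigma)}$ by setting $x=1$ in \eqref{thm05result}. On the signed-permutation side this leaves $\sum_{\pi\in\msn^B}y^{\operatorname{des}_B(\pi)}q^{\negg(\pi)}=B_n(y,q)$, recalling the definition of $B_n(x,q)$ from the introduction, while on the Stirling side it leaves $\sum_{\sigma\in\mq_{n+1}^{(0)}}y^{\ap(\sigma)}q^{\even(\sigma)}$. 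Thus for every value of $q$ we obtain the clean identity $B_n(y,q)=\sum_{\sigma\in\mq_{n+1}^{(0)}}y^{\ap(\sigma)}q^{\even(\sigma)}$, and the two assertions of the corollary correspond to two choices of $q$.

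For the first identity I would set $q=-1$. The right-hand side becomes exactly the target sum $\sum_{\sigma\in\mq_{n+1}^{(0)}}y^{\ap(\sigma)}(-1)^{\even(\sigma)}$, while the left-hand side becomes $B_n(y,-1)$. It then remains to quote Brenti's evaluation $B_n(y,-1)=(1-y)^n$ from \cite[Eq.~(14)]{Brenti94}; a quick check at $n=1$, where $\msn^B=\{1,\overline 1\}$ gives $1-y$, confirms the normalization.

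For the second identity I would set $q=-1/y$. Since $q^{\even(\sigma)}=(-1/y)^{\even(\sigma)}=y^{-\even(\sigma)}(-1)^{\even(\sigma)}$, the right-hand side turns into $\sum_{\sigma\in\mq_{n+1}^{(0)}}y^{\ap(\sigma)-\even(\sigma)}(-1)^{\even(\sigma)}$, exactly the claimed sum, while the left-hand side becomes $B_n(y,-1/y)$. Invoking Brenti's companion evaluation of $B_n(y,-1/y)$ in terms of the derangement polynomial $d_n(y)$ then yields the stated closed form $y\bigl(\tfrac{y-1}{y}\bigr)^n d_n(y)$.

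The substitutions themselves are routine, so the only genuine content — and the step I would treat most carefully — is matching the two specializations against the precise statement of \cite[Eq.~(14)]{Brenti94}. Concretely, I would confirm that Brenti's formula indeed produces $(1-y)^n$ at $q=-1$ and the derangement expression at $q=-1/y$; if his Eq.~(14) records only one of these, the other can be extracted either from the exponential generating function of $B_n(y,q)$ or from the classical relation tying $B_n(y,-1/y)$ to $d_n(y)$ (a small numerical check at $n=2$, where $B_2(y,-1/y)=(y-1)^2/y$, is the natural way to pin down the exact constant). No interpolation or bijective argument is needed beyond \eqref{thm05result}, which already supplies the combinatorial bridge.
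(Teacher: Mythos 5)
Your proposal coincides with the paper's own proof, which is exactly the one-line argument you give: set $x=1$ in \eqref{thm05result} to get $\sum_{\sigma\in\mq_{n+1}^{(0)}}y^{\ap(\sigma)}q^{\even(\sigma)}=B_n(y,q)$, then specialize $q=-1$ and $q=-1/y$ and quote the evaluations of $B_n(y,-1)$ and $B_n(y,-1/y)$ from \cite[Eq.~(14)]{Brenti94}. One remark: the $n=2$ sanity check you propose is genuinely worth carrying out, because your (correct) value $B_2(y,-1/y)=(y-1)^2/y$ disagrees by a factor of $y$ with the printed right-hand side, which with $d_2(y)=y$ gives $y\left(\frac{y-1}{y}\right)^2 d_2(y)=(y-1)^2$; the specialization argument actually yields $\left(\frac{y-1}{y}\right)^n d_n(y)$ (consistent with the generating-function identity for $B_n(y,q)$), so the leading factor $y$ in the stated corollary appears to be a typo in the paper rather than a defect of your method.
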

The {\it type $D$ Coxeter group} $\msn^D$ is the subgroup of
$\msn^B$ consisting of signed permutations with an even number of negative entries.
It follows from Theorem~\ref{thm05} that $$\#\{\sigma\in \mq_{n+1}^{(0)}\mid \text{$\even(\sigma)$ is odd} \}=\#\{\sigma\in \mq_{n+1}^{(0)}\mid \text{$\even(\sigma)$ is even} \}=\#\msn^D=2^{n-1}n!,$$ since $\#\msn^B=2^nn!$.
Let $\operatorname{des}_D(\pi)=\#\{i\in[n]\mid \pi(i-1)>\pi({i})\}$, where $\pi(0)=-\pi(2)$.
The {\it type $D$ Eulerian polynomial} is defined by
$${D}_n(x)=\sum_{\pi\in \msn^D}x^{\operatorname{des}_D(\pi)}.$$
Stembridge~\cite[Lemma 9.1]{Stembridge94} obtained that
$D_n(x)=B_n(x)-n2^{n-1}xA_{n-1}(x)$
for $n\geqslant 2$. We end this section by posing two problems.
\begin{problem}
Could we find a combinatorial interpretation of the type $D$ Eulerian polynomial $D_n(x)$ in terms of Stirling permutations in the set $\{\sigma\in \mq_{n+1}^{(0)}\mid \text{$\even(\sigma)$ is even} \}$?
\end{problem}
\begin{problem}
How to relate $\mq_{n}^{(0)}$ and $\{\sigma\in \mq_{n}^{(0)}\mid \text{$\even(\sigma)$ is even} \}$ to group operations?
\end{problem}
\section{Eight-variable and seventeen-variable polynomials}\label{Section4}
\subsection{Definitions, notation and preliminary results}
\hspace*{\parindent}

In equivalent forms, Dumont~\cite{Dumont80}, Haglund-Visontai~\cite{Haglund12}, Chen-Hao-Yang~\cite{Chen2102} and Ma-Ma-Yeh~\cite{Ma1902} all showed that
$$D_G^n(x)=C_n(x,y,z),$$
where $G=\{x \rightarrow xyz,~y\rightarrow xyz,~z\rightarrow xyz\}$.
By the change of grammar
$u=x+y+z,~v=xy+yz+zx$ and $w=xyz$,
it is clear that
$D_{G}(u)=3w,~D_{{G}}(v)=2uw,~D_{G}(w)=vw$. So we get a grammar
\begin{equation}\label{G8def}
H=\{w\rightarrow vw, u\rightarrow 3w, v\rightarrow 2uw\}.
\end{equation}
For any $n\geqslant 1$, Chen-Fu~\cite{Chen21} discovered that
\begin{equation}\label{Chen22}
C_n(x,y,z)=D_G^n(x)=D_H^{n-1}(w)=\sum_{i+2j+3k=2n+1}\gamma_{n,i,j,k}u^iv^jw^k.
\end{equation}
Substituting $u\rightarrow x+y+z,~v\rightarrow xy+yz+zx$ and $w\rightarrow xyz$, one can immediately obtain~\eqref{Cnxyz}.

A {\it rooted tree} of order $n$ with the vertices labelled $1,2,\ldots,n$, is an increasing tree if the
node labelled $1$ is distinguished as the root, and the labels along
any path from the root are increasing.
\begin{definition}
A {\it ternary increasing tree} of size $n$ is an increasing plane tree with $3n+1$ nodes in which each
interior node has a label and three ordered children (a left child,
a middle child and a right child), and exterior nodes have no children and no labels.
\end{definition}

Let $\mtn_n$ denote the set of ternary increasing trees of size $n$, see Figure~\ref{Fig01} for instance. For any $T\in\mtn_n$,
it is clear that $T$ has exactly $2n+1$ exterior nodes.
Let $\lends(T)$ (resp.~$\mends(T)$,~$\rends(T)$) be the number of exterior left nodes (resp.~exterior middle nodes, exterior right nodes) in $T$.
Using a recurrence relation that is equivalent to~\eqref{Dumont80}, Dumont~\cite[Proposition~1]{Dumont80} found that
\begin{equation}\label{QnTn}
\sum_{\sigma\in\mqn}x^{\asc(\sigma)}y^{\plat(\sigma)}z^{\des(\sigma)}=\sum_{T\in\mtn_n}x^{\lends(T)}y^{\mends(T)}z^{\rends(T)}.
\end{equation}
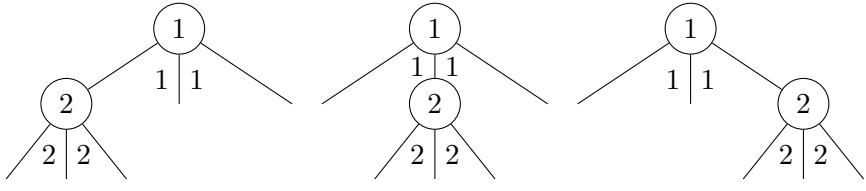
\begin{figure}[tbp]
\begin{center}
\begin{tikzpicture} 
\node [circle,draw] {1}
    child {node [circle,draw] {2}
    		child { edge from parent }
    		child { edge from parent node [left] {2} node [right] {2}}
    		child { edge from parent }
    }
    child { edge from parent node [left] {1} node [right] {1} }
    child { edge from parent };
\end{tikzpicture}\quad 
\begin{tikzpicture}
\node [circle,draw] {1}
    child { edge from parent }
    child { node [circle,draw] {2}     		
    		child { edge from parent }
    		child { edge from parent node [left] {2} node [right] {2}}
    		child { edge from parent }
    		edge from parent node[left] {1} node[right] {1}
    }
    child { edge from parent };
\end{tikzpicture}\quad %
\begin{tikzpicture}
\node [circle,draw] {1}
    child { edge from parent }
    child { edge from parent node[left] {1} node[right] {1}}
    child { node [circle,draw] {2}     		
    		child { edge from parent }
    		child { edge from parent node [left] {2} node [right] {2}}
    		child { edge from parent }
    		edge from parent
    };
\end{tikzpicture}
\end{center}
\caption{The ternary increasing trees of order 2 encoded by $2211,1221,1122$,
and their $\operatorname{SP}$-codes are given by $((0,0),(1,1)),((0,0)(1,2))$ and $((0,0)(1,3))$, respectively .}
\label{Fig01}
\end{figure}

A bijective proof of~\eqref{QnTn} can be found in the proof of~\cite[Theorem 1]{Janson11}. For the convenience of the reader,
we provide a brief description of it. Given $T\in\mtn_n$. Between the $3$ edges of $T$ going out from a node labelled $v$, we place $2$ integers $v$.
Now we perform the depth-first traversal and encode the tree $T$ by recording the sequence of the labels visited as we traverse around $T$.
The encoded sequence, denoted as $\phi(T)$, is a Stirling permutation.
Given $\sigma\in\mqn$. We proceed recursively by decomposing $\sigma$ as $u_11u_21u_3$, where the $u_i$'s are again Stirling permutations.
The smallest label in each $u_i$ is attached to the root node labelled $1$. One can recursively apply this procedure to each $u_i$ to obtain the tree representation, and
$\phi^{-1}(\sigma)$ is a ternary increasing tree. Using $\phi$, one can immediately find that the identity~\eqref{QnTn} holds.

 For $\sigma\in\mqn$, let
\begin{align*}
\operatorname{Dplat}(\sigma)&=\{\sigma_i\mid \sigma_{i-1}>\sigma_i=\sigma_{i+1}\},~
\operatorname{Dasc}(\sigma)=\{\sigma_i\mid \sigma_{i-1}<\sigma_{i}<\sigma_{i+1}\},\\
\operatorname{Dd}(\sigma)&=\{\sigma_i\mid \sigma_{i-1}>\sigma_i=\sigma_{j}>\sigma_{j+1},~i<j\},\\
\operatorname{Uu}(\sigma)&=\{\sigma_i\mid \sigma_{i-1}<\sigma_i=\sigma_{j}<\sigma_{j+1},~i<j\},\\
\operatorname{Ddes}(\sigma)&=\{\sigma_i\mid \sigma_{i-1}>\sigma_i>\sigma_{i+1}\},~
\operatorname{Pasc}(\sigma)=\{\sigma_i\mid \sigma_{i-1}=\sigma_i<\sigma_{i+1}\},\\
\operatorname{Apap}(\sigma)&=\{\sigma_i\mid \sigma_{i}<\sigma_{i+1}=\sigma_{i+2}~\&~\sigma_{j}<\sigma_{j+1}=\sigma_{j+2}~\&~\sigma_i=\sigma_j,~i<j\},\\
\operatorname{Dpa}(\sigma)&=\{\sigma_{i}\mid \sigma_{i-1}>\sigma_{i}=\sigma_{i+1}<\sigma_{i+2}\},\\
\operatorname{Pdpd}(\sigma)&=\{\sigma_i\mid \sigma_{i-2}=\sigma_{i-1}>\sigma_{i}~\&~\sigma_{j-2}=\sigma_{j-1}>\sigma_{j}~\&~\sigma_i=\sigma_j,~i<j\}.
\end{align*}
\begin{table}[h!]
 \begin{center}
    \begin{tabular}{l|c} 
      \textbf{Statistics on Stirling permutation} & \textbf{Statistics on $\operatorname{SP}$-code}\\
      \hline
      $\Asc$ (ascent) & $[n]-\{a_i\mid (a_i,1)\in C_n\}$ \\
      $\Plat$ (plateau) & $[n]-\{a_i\mid (a_i,2)\in C_n\}$ \\
      $\Des$ (descent)& $[n]-\{a_i\mid (a_i,3)\in C_n\}$ \\
    $\Lap$ (left ascent-plateau)& $[n]-\{a_i\mid (a_i,1)~\text{or}~ (a_i,2)\in C_n\}$ \\
    $\Rpd$ (right plateau-descent)& $[n]-\{a_i\mid (a_i,2)~\text{or}~ (a_i,3)\in C_n\}$ \\
   $\Eudd$ (exterior up-down pair)& $[n]-\{a_i\mid (a_i,1)~\text{or}~ (a_i,3)\in C_n\}$ \\
    $\Dasc$ (double ascent)& $\{a_i\mid (a_i,1)\notin C_n~\&~ (a_i,2)\in C_n\}$ \\
    $\Dplat$ (descent-plateau)& $\{a_i\mid (a_i,1)\in C_n~\&~ (a_i,2)\notin C_n\}$ \\
   $\Ddes$ (double descent)& $\{a_i\mid (a_i,2)\in C_n~\&~ (a_i,3)\notin C_n\}$ \\
  $\Pasc$ (plateau-ascent)& $\{a_i\mid (a_i,2)\notin C_n~\&~ (a_i,3)\in C_n\}$ \\
  $\operatorname{Uu}$ (up-up pair)& $\{a_i\mid (a_i,1)\notin C_n~\&~ (a_i,3)\in C_n\}$\\
  $\operatorname{Dd}$ (down-down pair)& $\{a_i\mid (a_i,1)\in C_n~\&~ (a_i,3)\notin C_n\}$\\
   $\operatorname{Apd}$ (ascent-plateau-descent)& $\{a_i\mid (a_i,1)\notin C_n~\&~ (a_i,2)\notin C_n~\&~ (a_i,3)\notin C_n\}$\\
  $\operatorname{Vv}$ (valley-valley pair)& $\{a_i\mid (a_i,1)\in C_n~\&~(a_i,2)\in C_n~\&~ (a_i,3)\in C_n\}$\\
    $\operatorname{Apap}$ (ascent-plateau pair)& $\{a_i\mid (a_i,1)\notin C_n~\&~(a_i,2)\in C_n~\&~ (a_i,3)\in C_n\}$\\
      $\operatorname{Dpa}$ (descent-plateau-ascent)& $\{a_i\mid (a_i,1)\in C_n~\&~(a_i,2)\notin C_n~\&~ (a_i,3)\in C_n\}$\\
        $\operatorname{Pdpd}$ (plateau-descent pair)& $\{a_i\mid (a_i,1)\in C_n~\&~(a_i,2)\in C_n~\&~ (a_i,3)\notin C_n\}$
    \end{tabular}
  \end{center}
\caption{The correspondences of statistics on Stirling permutations and $\operatorname{SP}$-codes}
\label{Table1}
\end{table}

denote the sets of descent-plateaux, double ascents, down-down pairs, up-up pairs, double descents, plateau-ascents,
ascent-plateau pairs, descent-plateau-ascents and plateau-descent pairs of $\sigma$, respectively.
Let $\dplat(\sigma)$ (resp.~$\dasc(\sigma)$, $\operatorname{dd}(\sigma)$, $\operatorname{uu}(\sigma)$, $\ddes(\sigma)$, $\pasc(\sigma)$, $\operatorname{apap}(\sigma)$ $\operatorname{dpa}(\sigma)$, $\operatorname{pdpd}(\sigma)$) denote
the number of descent-plateaux (resp.~double ascents, down-down pairs, up-up pairs, double descents, plateau-ascents, ascent-plateau pairs, descent-plateau-ascents, plateau-descent pairs) in $\sigma$. It should be noted that the statistics $\operatorname{apap}$, $\operatorname{dpa}$, $\operatorname{pdpd}$ are all new statistics.

In the sequel, we give a summary of Stirling permutation codes.
Following~\cite[p.~10]{Ma23}, any ternary increasing tree of size $n$
can be built up from the root $1$ by successively adding nodes $2,3,\ldots,n$.
Clearly, node $2$ is a child of the root $1$ and the root $1$ has at most three children.
For $2\leqslant i\leqslant n$,
when node $i$ is inserted, we distinguish three cases:
\begin{enumerate}
\item [$(c_1)$] if it is the left child of a node $v\in [i-1]$, then the node $i$ is coded as $[v,1]$;
\item [$(c_2)$] if it is the middle child of a node $v\in [i-1]$, then the node $i$ is coded as $[v,2]$;
\item [$(c_3)$] if it is the right child of a node $v\in [i-1]$, then the node $i$ is coded as $[v,3]$.
 \end{enumerate}
Thus the node $i$ is coded as a $2$-tuple $(a_{i-1},b_{i-1})$, where $1\leqslant a_{i-1}\leqslant i-1$, $1\leqslant b_{i-1}\leqslant 3$ and $(a_i,b_i)\neq(a_j,b_j)$ for all $1\leqslant i<j\leqslant n-1$.
For convenience, we name this build-tree code as Stirling permutation code. By convention, the root $1$ is coded as $(0,0)$.

\begin{definition}[{\cite[Definition~9]{Ma23}}]\label{def-BSP}
A $2$-tuples sequence $C_n=((0,0),(a_1,b_1),(a_2,b_2)\ldots,(a_{n-1},b_{n-1}))$ of length $n$ is called a Stirling permutation code ($\operatorname{SP}$-code for short)
if $1\leqslant a_i\leqslant i$, $1\leqslant b_i\leqslant 3$ and $(a_i,b_i)\neq(a_j,b_j)$ for all $1\leqslant i<j\leqslant n-1$.
\end{definition}
Let $\operatorname{CQ}_n$ be the set of $\operatorname{SP}$-codes of length $n$.
In particular, we have $$\operatorname{CQ}_1=\{(0,0)\},~\operatorname{CQ}_2=\{(0,0)(1,1),~(0,0)(1,2),~(0,0)(1,3)\}.$$

We now describe a bijection $\Gamma$ between $\mqn$ and $\operatorname{CQ}_n$.
There are three cases to obtain an element of $\mqn$ from an element $\sigma\in\mq_{n-1}$ by
inserting the two copies of $n$ between $\sigma_i$ and $\sigma_{i+1}$: $\sigma_i<\sigma_{i+1},~\sigma_i=\sigma_{i+1}$ or $\sigma_i>\sigma_{i+1}$.
Set $\Gamma(11)=(0,0)$. When $n\geqslant 2$, the bijection $\Gamma: \mqn\rightarrow \operatorname{CQ}_n$ can be defined as follows:
\begin{enumerate}
\item [$(c_1)$] $\sigma_i<\sigma_{i+1}$ if and only if $(a_{n-1},b_{n-1})=(\sigma_{i+1},1)$;
\item [$(c_2)$] $\sigma_i=\sigma_{i+1}$ if and only if $(a_{n-1},b_{n-1})=(\sigma_{i+1},2)$;
\item [$(c_3)$] $\sigma_i>\sigma_{i+1}$ if and only if $(a_{n-1},b_{n-1})=(\sigma_{i},3)$.
 \end{enumerate}

As discussed in~\cite{Ma23},
combing the bijections $\phi$ (from Stirling permutations to ternary increasing trees) and $\Gamma$ (from ternary increasing trees to Stirling permutation codes), we obtain Table~\ref{Table1}, which contains the correspondences of set valued statistics on Stirling permutations and $\operatorname{SP}$-codes.
It should be noted that the last four are new correspondences.
\subsection{Main results}
\hspace*{\parindent}

In order to study the six-variable polynomials defined by~\eqref{Q60},
we find that it would be necessary to introduce the following eight-variable polynomials:
$$Q_{n}(x,y,z,p,q,r,s,t)=\sum_{\sigma\in\mqn}x^{\asc{(\sigma)}}y^{\plat{(\sigma)}}z^{\des(\sigma)}p^{\lap(\sigma)}
q^{\eudd(\sigma)}r^{\rpd(\sigma)}s^{\operatorname{apd}(\sigma)}t^{\operatorname{vv}(\sigma)}.$$
where $\operatorname{apd}(\sigma)$ and $\operatorname{vv}(\sigma)$ are the numbers of ascent-plateau-descents and valley-valley pairs of $\sigma$, respectively.
In particular, $Q_1=xyzpqrs$ and $Q_2=xyzpqrs(xyp+xzq+yzr)$. For convenience, we set $Q_n:=Q_{n}(x,y,z,p,q,r,s,t)$.
We are now ready to answer Problem~\ref{pr}.
\begin{theorem}\label{thm01}
For any $n\geqslant 1$, we have the following decomposition
\begin{equation*}
Q_n(x,y,z,p,q,r,s,t)=t^{n}\sum_{i+2j+3k=2n+1}\gamma_{n,i,j,k}\left(\frac{x+y+z}{t}\right)^i\left(\frac{xyp+xzq+yzr}{t}\right)^j\left(\frac{xyzpqrs}{t}\right)^{k}.
\end{equation*}
\end{theorem}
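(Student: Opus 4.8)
The plan is to transport the sum defining $Q_n$ from Stirling permutations to ternary increasing trees via the bijection $\phi$, and to exploit the fact that---once read through the $\operatorname{SP}$-code dictionary of Table~\ref{Table1}---every one of the eight statistics is \emph{node-local}, i.e.\ whether a given value $v\in[n]$ is counted by a given statistic depends only on the slot-configuration of the node labelled $v$ in $T=\phi^{-1}(\sigma)$. Concretely, writing $L_v,M_v,R_v\in\{0,1\}$ for the indicators that the left, middle, right child-slot of $v$ is an exterior node, Table~\ref{Table1} tells us that $v$ is an ascent iff $L_v=1$, a plateau iff $M_v=1$, a descent iff $R_v=1$; a left ascent-plateau iff $L_v=M_v=1$, an exterior up-down pair iff $L_v=R_v=1$, a right plateau-descent iff $M_v=R_v=1$; an ascent-plateau-descent iff $L_v=M_v=R_v=1$; and a valley-valley pair iff $L_v=M_v=R_v=0$. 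Hence the monomial recorded by $\sigma$ factors over the nodes of $T$, so that $Q_n=\sum_{T\in\mtn_n}\prod_{v}w(v)$, where the local weight $w(v)$ depends only on the number and positions of the leaf-slots of $v$.

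First I would tabulate $w(v)$ by the degree of $v$ (its number of internal children). A degree-$3$ node ($L_v=M_v=R_v=0$) has weight $t$; a degree-$2$ node has exactly one leaf-slot and weight $x$, $y$ or $z$ according as that slot is left, middle or right; a degree-$1$ node has two leaf-slots and weight $xyp$, $xzq$ or $yzr$ according as the two leaf-slots are $\{L,M\}$, $\{L,R\}$ or $\{M,R\}$; a degree-$0$ node has weight $xyzpqrs$. Next I would use the straightforward bijection between ternary (slotted) increasing trees and $0$-$1$-$2$-$3$ increasing plane trees equipped, at each vertex $v$ of degree $d$, with a choice of which $d$ of the three slots carry its (ordered) children; there are $\binom{3}{d}$ such choices, made independently at each vertex. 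Summing $\prod_v w(v)$ over these slot-choices therefore factors vertex by vertex, and the total contribution of a single vertex becomes $x+y+z$ if it has degree $2$, $xyp+xzq+yzr$ if degree $1$, $xyzpqrs$ if degree $0$, and $t$ if degree $3$.

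Consequently, collecting $0$-$1$-$2$-$3$ increasing plane trees with $i$ vertices of degree $2$, $j$ of degree $1$, $k$ of degree $0$ (and hence $n-i-j-k$ of degree $3$), and invoking Chen--Fu's interpretation of $\gamma_{n,i,j,k}$ recorded after~\eqref{Cnxyz}, I obtain
\[
Q_n=\sum_{i+2j+3k=2n+1}\gamma_{n,i,j,k}\,(x+y+z)^i(xyp+xzq+yzr)^j(xyzpqrs)^k\,t^{\,n-i-j-k},
\]
where the constraint $i+2j+3k=2n+1$ records the total number $3k+2j+i$ of exterior nodes. Writing $t^{\,n-i-j-k}=t^n\,t^{-i}t^{-j}t^{-k}$ and distributing the three factors of $t^{-1}$ into the three bases yields exactly the claimed decomposition. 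I expect the genuine work to be concentrated in the first step: verifying that the two new statistics $\operatorname{apd}$ and $\operatorname{vv}$, together with the pair statistics $\lap$, $\eudd$ and $\rpd$, really do reduce to the stated single-node slot conditions. Once this node-locality is in hand---which is precisely what Table~\ref{Table1} supplies via $\phi$ and $\Gamma$---the remaining steps are just the regrouping that already turns~\eqref{QnTn} into~\eqref{Chen22}, now carried out with the refined local weights, and they present no real obstacle. As a consistency check one recovers $Q_1=xyzpqrs$ and $Q_2=xyzpqrs(xyp+xzq+yzr)$ directly from the formula.
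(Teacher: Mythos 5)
Your proposal is correct, and it reaches the expansion by a genuinely different mechanism than the paper's own proof, even though the two arguments share their structural core. The common core is the observation that, via $\phi$, $\Gamma$ and Table~\ref{Table1}, all eight statistics become local to the nodes of the ternary increasing tree, producing exactly the four local weights $xyzpqrs$, $xyp+xzq+yzr$, $x+y+z$ and $t$ at vertices of degree $0,1,2,3$ of the simplified tree; the paper introduces precisely these as the vertex labels $P$, $P_1$, $P_2$, $t$. From there the routes diverge: the paper evaluates the weighted sum \emph{dynamically}, checking that appending a $2$-tuple $(a_n,b_n)$ to an $\operatorname{SP}$-code realizes one of the rewriting rules of the grammar $I=\{P\rightarrow PP_1,\ P_1\rightarrow 2PP_2,\ P_2\rightarrow 3tP\}$ (the multiplicities $2$ and $3$ being the slot counts that your argument handles all at once at the end), then identifies $I$ with Chen--Fu's grammar $H$ of \eqref{G8def} under $w=P$, $v=P_1$, $u=P_2$ and quotes the grammatical expansion \eqref{Chen22}. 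You evaluate it \emph{statically}: you factor the monomial of $\sigma$ over the vertices of $\phi^{-1}(\sigma)$, strip the slots via the bijection between ternary increasing trees and $0$-$1$-$2$-$3$ increasing plane trees with $\binom{3}{d}$ independent slot choices at each degree-$d$ vertex, and invoke the tree-counting interpretation of $\gamma_{n,i,j,k}$ from the Chen--Fu proposition accompanying \eqref{Cnxyz}. Since \eqref{Chen22} and that interpretation are the same Chen--Fu theorem in two guises, the external input is identical; what your version buys is a bijective, induction-free proof in which the constraint $i+2j+3k=2n+1$ (the exterior-node count) and the substituted bases are visibly forced, and which correctly isolates the only genuine content in the Table~\ref{Table1} dictionary --- in particular the slot readings of the new statistics $\operatorname{apd}$ (all three slots exterior, weight $xyzpqrs$) and $\operatorname{vv}$ (no slot exterior, weight $t$), which the paper likewise takes from Table~\ref{Table1} rather than reproving. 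What the grammar route buys the paper is reusable scaffolding: the same derivative computation is recycled essentially verbatim for the seventeen-variable polynomial in Theorem~\ref{thm03}, though your factorization would handle $F_n$ just as mechanically by refining the local weights, and your consistency checks for $Q_1$ and $Q_2$ match the paper's.
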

We next relate the polynomial $Q_n$ to a five-variable polynomial.
\begin{theorem}\label{thm02}
The polynomial $Q_n:=Q_n(x,y,z,p,q,r,s,t)$ can be expanded as follows:
$$Q_n=\left(\frac{x+y+z}{3}\right)^nQ_n\left(1,1,1,\frac{3xyp}{x+y+z},\frac{3xzq}{x+y+z},\frac{3yzr}{x+y+z},\frac{s(x+y+z)^2}{9xyz},\frac{3t}{x+y+z}\right).$$
Since $C_n(x,y,z)=Q_n(x,y,z,1,1,1,1,1)$, we obtain
\begin{equation}\label{thm02cor2}
{C_n(x,y,z)}=\sum_{\sigma\in\mqn}(xy)^{\lap(\sigma)}(xz)^{\eudd(\sigma)}
(yz)^{\rpd(\sigma)}(xyz)^{-\operatorname{apd}(\sigma)}\left(\frac{x+y+z}{3}\right)^{\alpha_n(\sigma)},
\end{equation}
where $\alpha_n(\sigma)=n+2\operatorname{apd}(\sigma)-\lap(\sigma)-\eudd(\sigma)-\rpd(\sigma)-\operatorname{vv}(\sigma)$.
\end{theorem}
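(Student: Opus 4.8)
The plan is to deduce both parts of Theorem~\ref{thm02} directly from the explicit $e$-positive expansion of Theorem~\ref{thm01}, so that everything reduces to tracking the homogeneity of the three generators $u=x+y+z$, $v=xyp+xzq+yzr$ and $w=xyzpqrs$ under the prescribed substitution. First I would rewrite Theorem~\ref{thm01} in the streamlined form
$$Q_n=\sum_{i+2j+3k=2n+1}\gamma_{n,i,j,k}\,u^iv^jw^k\,t^{\,n-i-j-k},$$
obtained by absorbing the outer factor $t^n$ and distributing the denominators $t^{-(i+j+k)}$ into the summand.

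Next I would compute the effect of the substitution
$$\Phi:\ (x,y,z,p,q,r,s,t)\longmapsto\Bigl(1,1,1,\tfrac{3xyp}{u},\tfrac{3xzq}{u},\tfrac{3yzr}{u},\tfrac{su^2}{9xyz},\tfrac{3t}{u}\Bigr)$$
on these generators. A short calculation gives $u\mapsto 3$ and $v\mapsto 3v/u$, while the only step requiring care is $w\mapsto 3w/u$: the images of $p,q,r$ each contribute a factor $3/u$ together with an extra mass $x^2y^2z^2$, and the image of $s$ is engineered precisely so that the factor $u^2/(9xyz)$ cancels this surplus down to $3w/u$. The image of $t$ is $3t/u$ by definition.

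Feeding these images into the expansion of $Q_n(\Phi)$ and collecting exponents term by term, the powers of $3$ sum to $3^{\,i+j+k+(n-i-j-k)}=3^n$ and the powers of $u$ to $u^{\,-j-k-(n-i-j-k)}=u^{\,i-n}$, uniformly in the summation index $(i,j,k)$. Hence $Q_n(\Phi)=3^nu^{-n}\sum\gamma_{n,i,j,k}u^iv^jw^kt^{\,n-i-j-k}=3^nu^{-n}Q_n$, and multiplying through by $(u/3)^n$ yields the asserted functional equation. For the corollary I would set $p=q=r=s=t=1$, so the arguments of the inner $Q_n$ become $P=3xy/u$, $Q=3xz/u$, $R=3yz/u$, $S=u^2/(9xyz)$ and $T=3/u$, giving $Q_n(1,1,1,P,Q,R,S,T)=\sum_{\sigma}P^{\lap(\sigma)}Q^{\eudd(\sigma)}R^{\rpd(\sigma)}S^{\operatorname{apd}(\sigma)}T^{\operatorname{vv}(\sigma)}$. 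Reading off the weight of each $\sigma$ and separating the monomial $(xy)^{\lap}(xz)^{\eudd}(yz)^{\rpd}(xyz)^{-\operatorname{apd}}$ from the powers of $3$ and of $u$, I would find that after multiplication by $(u/3)^n$ the latter collapse to $(u/3)^{\alpha_n(\sigma)}$ with $\alpha_n(\sigma)=n+2\operatorname{apd}(\sigma)-\lap(\sigma)-\eudd(\sigma)-\rpd(\sigma)-\operatorname{vv}(\sigma)$, which is precisely \eqref{thm02cor2}.

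I expect the only genuine obstacle to be the bookkeeping in the $w$-substitution and the subsequent exponent collapse, where the degree relation $i+2j+3k=2n+1$ is what forces the global factor $(u/3)^n$ to be index-independent. Conceptually the whole argument is the observation that $\Phi$ rescales the homogenized generators $(u,v,w,t)$ by the common factor $3/u$ while normalizing $u$ to $3$, so the weighted enumeration is invariant up to this predictable prefactor; nothing beyond Theorem~\ref{thm01} and elementary algebra of exponents is required.
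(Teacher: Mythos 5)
Your proof is correct and takes essentially the same route as the paper: both arguments deduce the functional equation from the expansion of Theorem~\ref{thm01} together with the degree relation $i+2j+3k=2n+1$ (equivalently, total degree $n$ in the generators $u,v,w,t$) --- the paper packages this homogeneity as the auxiliary polynomial $\widetilde{R}_n$ with $R_n(P,P_1,P_2,t)=P_2^n\widetilde{R}_n\left(\frac{P}{P_2},\frac{P_1}{P_2},\frac{t}{P_2}\right)$, whereas you track the exponents of $3$ and $u$ term by term, and your specialization $p=q=r=s=t=1$ recovering the second identity is the same as the paper's. One cosmetic slip: the phrase ``uniformly in the summation index'' is inaccurate for the factor $u^{i-n}$ as stated, but your displayed computation immediately repairs it by reabsorbing $u^{i}$ into the sum, so the argument stands.
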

It should be noted that~\eqref{thm02cor2} implies that the following two results are equivalent:
\begin{itemize}
  \item the triple statistic $(\asc,\plat,\des)$ is a symmetric distribution over $\mqn$;
  \item the triple statistic $(\lap,\eudd,\rpd)$ is a symmetric distribution over $\mqn$.
\end{itemize}

Define
$$M_n(\beta_1,\beta_3,\beta_5)=\sum_{\sigma\in\mqn}\beta_1^{\dplat(\sigma)}\beta_4^{\operatorname{uu}(\sigma)}\beta_5^{\ddes(\sigma)},$$
$$P_n(\alpha_1,\alpha_2,\alpha_3)=\sum_{\sigma\in\mqn}\alpha_1^{\operatorname{apap}(\sigma)}\alpha_2^{\operatorname{dpa}(\sigma)}\alpha_3^{\operatorname{pdpd}(\sigma)},$$
$$E_n(\beta_1,\beta_2,\beta_3,\beta_4,\beta_5,\beta_6)=\sum_{\sigma\in\mqn}\beta_1^{\dplat(\sigma)}\beta_2^{\dasc(\sigma)}\beta_3^{\operatorname{dd}(\sigma)}\beta_4^{\operatorname{uu}(\sigma)}
\beta_5^{\ddes(\sigma)}\beta_6^{\pasc(\sigma)},$$
$$\alpha(\sigma)=\alpha_1^{\operatorname{apap}(\sigma)}\alpha_2^{\operatorname{dpa}(\sigma)}\alpha_3^{\operatorname{pdpd}(\sigma)},$$
$$\beta(\sigma)=\beta_1^{\dplat(\sigma)}\beta_2^{\dasc(\sigma)}\beta_3^{\operatorname{dd}(\sigma)}\beta_4^{\operatorname{uu}(\sigma)}
\beta_5^{\ddes(\sigma)}\beta_6^{\pasc(\sigma)}.$$
Let $F_n$ denote the following seventeen-variable polynomials:
$$\sum_{\sigma\in\mqn}\alpha(\sigma)\beta(\sigma)x^{\asc{(\sigma)}}y^{\plat{(\sigma)}}z^{\des(\sigma)}p^{\lap(\sigma)}
q^{\eudd(\sigma)}r^{\rpd(\sigma)}s^{\operatorname{apd}(\sigma)}t^{\operatorname{vv}(\sigma)}.$$
We can now present a generalization of Theorem~\ref{thm01}.
\begin{theorem}\label{thm03}
For any $n\geqslant 1$, the seventeen-variable polynomial $F_n$ has the expansion formula:
\begin{equation}\label{Fn-expansion}
F_n=t^n\sum_{i+2j+3k=2n+1}\gamma_{n,i,j,k}\left(\frac{\delta_2}{t}\right)^i\left(\frac{\delta_1}{t}\right)^j\left(\frac{\delta}{t}\right)^k,
\end{equation}
where $\delta=xyzpqrs,~\delta_1=\beta_4\beta_6xyp+\beta_2\beta_5xzq+\beta_1\beta_3yzr$ and $\delta_2=\alpha_1\beta_2\beta_4x+\alpha_2\beta_1\beta_6y+\alpha_3\beta_3\beta_5z$.
\end{theorem}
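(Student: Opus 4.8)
The plan is to realize $F_n$ as a weight enumerator over ternary increasing trees and then to group trees by their underlying ``degree profile,'' so that the coefficients $\gamma_{n,i,j,k}$ enter exactly as in the Chen--Fu expansion~\eqref{Chen22}. First I would transport the sum from $\mqn$ to $\mtn_n$ using the bijection $\phi$ together with Table~\ref{Table1}: via the correspondences recorded there, every one of the seventeen statistics appearing in $F_n$ is read off from the occupancy pattern $(e_1,e_2,e_3)\in\{0,1\}^3$ of the three child-slots of a vertex, where $e_b=1$ means that the $b$-th slot carries an exterior node. Carrying this out vertex by vertex shows that
$$F_n=\sum_{T\in\mtn_n}\ \prod_{v\in T}W\bigl(e_1(v),e_2(v),e_3(v)\bigr),$$
where $W$ is the monomial attached to each of the eight patterns. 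The key bookkeeping step is to check that $W$ factorizes as a single-slot part $x^{e_1}y^{e_2}z^{e_3}$, three pairwise interaction factors for the slot pairs $\{1,2\},\{1,3\},\{2,3\}$ (governed by $p,q,r$ and the $\beta_i$), and one triple factor (governed by $s,t$ and the $\alpha_i$); one then reads off the eight values $W(1,1,1)=\delta$, $W(0,0,0)=t$, the three degree-one values whose sum is $\delta_1$, and the three degree-two values whose sum is $\delta_2$.

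Next I would group the trees of $\mtn_n$ according to the abstract increasing tree obtained by forgetting which ternary slots are occupied, i.e.\ retaining only the linear order of the interior children at each node. These are precisely the $0$-$1$-$2$-$3$ increasing trees counted by Chen--Fu, so the number with $k$ leaves, $j$ degree-one and $i$ degree-two vertices is $\gamma_{n,i,j,k}$. The fibre over a fixed abstract tree consists of all order-preserving assignments of the $d$ interior children of each node to the three slots; there are $\binom{3}{d}$ such choices, namely $1,3,3,1$ for $d=0,1,2,3$. Since the weight is multiplicative over vertices, summing over a fibre factorizes as $\prod_v\bigl(\sum_{\text{slot choices}}W\bigr)$, and the per-vertex sums are exactly $\delta$ (leaf), $\delta_1$ (degree one), $\delta_2$ (degree two) and $t$ (degree three). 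Hence
$$F_n=\sum_{i+2j+3k=2n+1}\gamma_{n,i,j,k}\,\delta_2^{\,i}\,\delta_1^{\,j}\,\delta^{\,k}\,t^{\,n-i-j-k},$$
the support condition $i+2j+3k=2n+1$ being automatic because it counts the $2n+1$ exterior nodes. Rewriting $t^{\,n-i-j-k}=t^{n}/t^{\,i+j+k}$ and distributing the inverse powers of $t$ then yields~\eqref{Fn-expansion}.

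The step I expect to be the main obstacle is the verification underlying the first displayed identity: one must confirm, uniformly across all eight occupancy patterns, that each of the seventeen statistics is recorded with exactly the exponent dictated by $\delta$, $\delta_1$, $\delta_2$ and $t$. This is a finite but delicate check resting on Table~\ref{Table1} and on the observation that a degree-$d$ vertex corresponds to a pattern with exactly $3-d$ empty slots; in particular the three degree-one patterns must carry the factors $\beta_4\beta_6\,xyp,\ \beta_2\beta_5\,xzq,\ \beta_1\beta_3\,yzr$ summing to $\delta_1$, while the three degree-two patterns must carry $\alpha_1\beta_2\beta_4\,x,\ \alpha_2\beta_1\beta_6\,y,\ \alpha_3\beta_3\beta_5\,z$ summing to $\delta_2$. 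Once this correspondence is pinned down, the fibre-grouping argument is purely formal.

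Alternatively one may phrase the whole argument grammatically, in the spirit of~\eqref{Chen22}. Here I would introduce a grammar $G$ with auxiliary letters recording each vertex's slot-occupancy state, so that $F_n=D_G^{n-1}(\delta)$ under the tree-building ($\operatorname{SP}$-code) labeling, and then perform the change of grammar to the variables $\delta,\delta_1,\delta_2,t$, verifying $D_G(\delta)=\delta_1\delta$, $D_G(\delta_1)=2\delta_2\delta$, $D_G(\delta_2)=3t\delta$ and $D_G(t)=0$. The expansion then follows from the same induction that produces $D_H^{n-1}(w)=\sum\gamma_{n,i,j,k}u^iv^jw^k$, with the extra factor $t$ in the rule for $\delta_2$ recording each passage from degree two to degree three.
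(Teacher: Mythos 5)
Your proposal is correct, and your primary argument takes a genuinely different route from the paper's. The paper proves \eqref{Fn-expansion} dynamically: it encodes the seventeen statistics via Table~\ref{Table1} as a weight on $\operatorname{SP}$-codes, checks that appending a $2$-tuple $(a_n,b_n)$ realizes one substitution of the grammar $J=\{\delta\rightarrow\delta\delta_1,\ \delta_1\rightarrow 2\delta\delta_2,\ \delta_2\rightarrow 3t\delta\}$ on the simplified ternary increasing tree (leaves labelled $\delta$, degree-one vertices $\delta_1$, degree-two vertices $\delta_2$, degree-three vertices $t$), and then obtains $F_n=D_J^{n-1}(\delta)$ and concludes by the change-of-grammar identity \eqref{Chen22} --- which is exactly your closing ``alternative'' paragraph, down to $D_J(t)=0$ and the extra factor $t$ in the rule for $\delta_2$. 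Your main proof is instead static: you factor the weight multiplicatively over the vertices of a ternary increasing tree according to slot-occupancy patterns, then fibre over the underlying $0$-$1$-$2$-$3$ increasing plane trees, so that $\gamma_{n,i,j,k}$ enters directly through the Chen--Fu combinatorial interpretation accompanying \eqref{Cnxyz} rather than through an induction on $n$. The finite check you flag does close: for instance, the degree-one pattern with only the right slot occupied contributes $x$, $y$, $p$ together with $\beta_4$ (since $(a_i,1)\notin C_n$, $(a_i,3)\in C_n$) and $\beta_6$ (since $(a_i,2)\notin C_n$, $(a_i,3)\in C_n$), giving $\beta_4\beta_6xyp$, and the other seven patterns similarly reproduce $\delta$, $t$, and the remaining summands of $\delta_1$ and $\delta_2$; moreover the fibre over an abstract tree is a product over vertices of the $\binom{3}{d}$ order-preserving slot choices, so the fibre sum factorizes as you claim, and $i+2j+3k=2n+1$ is indeed forced by counting exterior slots. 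What each approach buys: yours is bijective and makes transparent why the same coefficients $\gamma_{n,i,j,k}$ govern \eqref{Cnxyz}, \eqref{Nnxyz02}, \eqref{Q6} and \eqref{Fn-expansion} (they literally count the base trees of the fibres), at the price of importing the Chen--Fu tree interpretation; the paper's grammar proof is uniform with its proof of Theorem~\ref{thm01} and needs only the algebraic identity \eqref{Chen22}, but hides the combinatorics inside the operator induction.
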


A special case of~\eqref{Fn-expansion} is given as follows.
\begin{corollary}
The trivariate polynomial $M_n(\beta_1,\beta_3,\beta_5)$ is $e$-positive. More precisely,
$$M_n(\beta_1,\beta_3,\beta_5)=\sum_{i+2j+3k=2n+1}\gamma_{n,i,j,k}(\beta_1+\beta_4+\beta_5)^{i+j}.$$
\end{corollary}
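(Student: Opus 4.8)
The plan is to obtain this corollary as an immediate specialization of Theorem~\ref{thm03}, so no new combinatorial work is required beyond bookkeeping. First I would identify the substitution that turns the seventeen-variable polynomial $F_n$ into the polynomial $M_n$ (which marks $\dplat$ by $\beta_1$, $\operatorname{uu}$ by $\beta_4$, and $\ddes$ by $\beta_5$). Concretely, set the eight ``ternary'' variables $x=y=z=p=q=r=s=t=1$, the three $\alpha$-variables $\alpha_1=\alpha_2=\alpha_3=1$, and the three $\beta$-variables $\beta_2=\beta_3=\beta_6=1$ (these last three mark $\dasc$, $\operatorname{dd}$ and $\pasc$, precisely the statistics that do not appear in $M_n$). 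Under this substitution the monomial $x^{\asc(\sigma)}\cdots t^{\operatorname{vv}(\sigma)}$ equals $1$ for every $\sigma$, and $\alpha(\sigma)\beta(\sigma)$ collapses to $\beta_1^{\dplat(\sigma)}\beta_4^{\operatorname{uu}(\sigma)}\beta_5^{\ddes(\sigma)}$, so the left-hand side of \eqref{Fn-expansion} becomes exactly $M_n$.

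Next I would track the same substitution through the auxiliary forms $\delta,\delta_1,\delta_2$ on the right-hand side. Since $\delta=xyzpqrs$ we get $\delta=1$ and $t=1$, while
\[
\delta_1=\beta_4\beta_6\,xyp+\beta_2\beta_5\,xzq+\beta_1\beta_3\,yzr=\beta_4+\beta_5+\beta_1,
\]
\[
\delta_2=\alpha_1\beta_2\beta_4\,x+\alpha_2\beta_1\beta_6\,y+\alpha_3\beta_3\beta_5\,z=\beta_4+\beta_1+\beta_5,
\]
so that $\delta_1=\delta_2=\beta_1+\beta_4+\beta_5$. Plugging these into \eqref{Fn-expansion}, each factor $(\delta/t)^k$ is $1$, the prefactor $t^n$ is $1$, and the two surviving factors $(\delta_2/t)^i(\delta_1/t)^j$ fuse into $(\beta_1+\beta_4+\beta_5)^{i+j}$. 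Reading off the result yields
\[
M_n(\beta_1,\beta_4,\beta_5)=\sum_{i+2j+3k=2n+1}\gamma_{n,i,j,k}(\beta_1+\beta_4+\beta_5)^{i+j},
\]
which is the stated formula.

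Finally, the $e$-positivity is then automatic: by \eqref{Cnxyz} the coefficients $\gamma_{n,i,j,k}$ are nonnegative integers (they count ternary increasing trees with prescribed numbers of leaves and low-degree vertices), and each summand is a nonnegative power of the elementary symmetric expression $\beta_1+\beta_4+\beta_5$. I expect there to be no genuine obstacle in this argument; the one point that needs care is verifying that exactly the three variables $\beta_2,\beta_3,\beta_6$ are set to $1$, which is what forces $\delta_1$ and $\delta_2$ to collapse to the \emph{same} linear form $\beta_1+\beta_4+\beta_5$ and thereby permits the merging of the $i$- and $j$-factors into the single power $(\beta_1+\beta_4+\beta_5)^{i+j}$.
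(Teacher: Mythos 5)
Your proof is correct and is exactly the argument the paper intends: the corollary is presented there as a direct specialization of Theorem~\ref{thm03}, and your substitution ($x=y=z=p=q=r=s=t=1$, $\alpha_1=\alpha_2=\alpha_3=\beta_2=\beta_3=\beta_6=1$) correctly collapses both $\delta_1$ and $\delta_2$ to the same linear form $\beta_1+\beta_4+\beta_5$, which is what merges the $i$- and $j$-factors into $(\beta_1+\beta_4+\beta_5)^{i+j}$. Note only that the paper's notation $M_n(\beta_1,\beta_3,\beta_5)$ is a typo (its own definition marks $\dplat$, $\operatorname{uu}$, $\ddes$ by $\beta_1,\beta_4,\beta_5$), which your write-up silently and correctly repairs.
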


As a unified extension of $N_n(p,q,r)$ and $P_n(\alpha_1,\alpha_2,\alpha_3)$, consider the six-variable polynomials
$$NP_n(p,q,r,\alpha_1,\alpha_2,\alpha_3)=\sum_{\sigma\in\mqn}p^{\lap(\sigma)}q^{\eudd(\sigma)}r^{\rpd(\sigma)}
\alpha_1^{\operatorname{apap}(\sigma)}\alpha_2^{\operatorname{dpa}(\sigma)}\alpha_3^{\operatorname{pdpd}(\sigma)}.$$
\begin{corollary}
The six-variable polynomials $NP_n(p,q,r,\alpha_1,\alpha_2,\alpha_3)$ can be expanded as follows:
$$NP_n(p,q,r,\alpha_1,\alpha_2,\alpha_3)=\sum_{i+2j+3k=2n+1}\gamma_{n,i,j,k}(\alpha_1+\alpha_2+\alpha_3)^i(p+q+r)^{j}(pqr)^k.$$
When $p=q=r=1$, we see that the polynomial $P_n(\alpha_1,\alpha_2,\alpha_3)$ is $e$-positive, i.e.,
$$P_n(\alpha_1,\alpha_2,\alpha_3)=\sum_{i+2j+3k=2n+1}\gamma_{n,i,j,k}3^j(\alpha_1+\alpha_2+\alpha_3)^i.$$
\end{corollary}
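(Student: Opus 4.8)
The plan is to derive this corollary directly by specializing the seventeen-variable expansion of Theorem~\ref{thm03}, so that no new combinatorial construction is needed. First I would identify $NP_n(p,q,r,\alpha_1,\alpha_2,\alpha_3)$ as a substitution instance of $F_n$. Recall that $F_n$ records, besides the six auxiliary statistics encoded by $\beta(\sigma)$ and the three encoded by $\alpha(\sigma)$, the eight principal statistics weighted by $x,y,z,p,q,r,s,t$. Since $NP_n$ tracks only $\lap$, $\eudd$, $\rpd$ (via $p,q,r$) and $\operatorname{apap}$, $\operatorname{dpa}$, $\operatorname{pdpd}$ (via $\alpha_1,\alpha_2,\alpha_3$), I would set
$$x=y=z=s=t=1,\qquad \beta_1=\beta_2=\beta_3=\beta_4=\beta_5=\beta_6=1.$$
Under these values the generic weight $\alpha(\sigma)\beta(\sigma)\,x^{\asc(\sigma)}y^{\plat(\sigma)}z^{\des(\sigma)}p^{\lap(\sigma)}q^{\eudd(\sigma)}r^{\rpd(\sigma)}s^{\operatorname{apd}(\sigma)}t^{\operatorname{vv}(\sigma)}$ collapses to $p^{\lap(\sigma)}q^{\eudd(\sigma)}r^{\rpd(\sigma)}\alpha_1^{\operatorname{apap}(\sigma)}\alpha_2^{\operatorname{dpa}(\sigma)}\alpha_3^{\operatorname{pdpd}(\sigma)}$, so that $F_n$ specializes exactly to $NP_n(p,q,r,\alpha_1,\alpha_2,\alpha_3)$.

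Next I would carry out the same substitution on the right-hand side of~\eqref{Fn-expansion}. From the definitions $\delta=xyzpqrs$, $\delta_1=\beta_4\beta_6xyp+\beta_2\beta_5xzq+\beta_1\beta_3yzr$ and $\delta_2=\alpha_1\beta_2\beta_4x+\alpha_2\beta_1\beta_6y+\alpha_3\beta_3\beta_5z$, the chosen values give
$$\delta=pqr,\qquad \delta_1=p+q+r,\qquad \delta_2=\alpha_1+\alpha_2+\alpha_3.$$
Since $t=1$, the prefactor $t^n(\delta_2/t)^i(\delta_1/t)^j(\delta/t)^k$ reduces to $\delta_2^i\delta_1^j\delta^k$, and reading off the expansion yields the first displayed identity
$$NP_n=\sum_{i+2j+3k=2n+1}\gamma_{n,i,j,k}(\alpha_1+\alpha_2+\alpha_3)^i(p+q+r)^j(pqr)^k.$$

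Finally, for the $e$-positivity of $P_n$ I would set $p=q=r=1$ in this identity. The left side becomes $NP_n(1,1,1,\alpha_1,\alpha_2,\alpha_3)=P_n(\alpha_1,\alpha_2,\alpha_3)$, while on the right side $(p+q+r)^j=3^j$ and $(pqr)^k=1$, producing $P_n(\alpha_1,\alpha_2,\alpha_3)=\sum_{i+2j+3k=2n+1}\gamma_{n,i,j,k}3^j(\alpha_1+\alpha_2+\alpha_3)^i$, which is manifestly $e$-positive because every $\gamma_{n,i,j,k}\geqslant 0$. The argument has essentially no analytic obstacle; the only point requiring genuine care is the bookkeeping in the first step, namely verifying against the definitions of $\alpha(\sigma)$, $\beta(\sigma)$ and $F_n$ that the chosen specialization suppresses precisely the unwanted statistics and leaves $NP_n$ intact.
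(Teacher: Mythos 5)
Your proposal is correct and matches the paper's own treatment: the corollary is stated there as an immediate specialization of Theorem~\ref{thm03}, obtained exactly as you do by setting $x=y=z=s=t=1$ and $\beta_1=\cdots=\beta_6=1$, so that $\delta=pqr$, $\delta_1=p+q+r$, $\delta_2=\alpha_1+\alpha_2+\alpha_3$, and then putting $p=q=r=1$ for the $e$-positivity of $P_n(\alpha_1,\alpha_2,\alpha_3)$. Your bookkeeping of which statistics survive the specialization is accurate, and no further argument is needed.
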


Note that $$E_n(\beta_1,\beta_2,\beta_3,\beta_4,\beta_5,\beta_6)=\sum_{i+2j+3k=2n+1}\gamma_{n,i,j,k}
\left({\beta_2\beta_4+\beta_1\beta_6+\beta_3\beta_5}\right)^i
\left({\beta_4\beta_6+\beta_2\beta_5+\beta_1\beta_3}\right)^j.$$
\begin{corollary}
We have $E_n(x,y,1,1,1,1)=E_n(1,1,x,y,1,1)=E_n(1,1,1,1,x,y)$ and
the polynomial $E_n(x,y,1,1,1,1)$ is $e$-positive.
\end{corollary}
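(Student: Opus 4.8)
The plan is to read everything off the expansion of Theorem~\ref{thm03}; the corollary is a pure specialization, so the only real work is bookkeeping. First I would record the closed form for $E_n$ itself. Setting $x=y=z=p=q=r=s=t=1$ and $\alpha_1=\alpha_2=\alpha_3=1$ in $F_n$ removes every factor carrying $x,\dots,t$ and every factor carrying $\alpha_1,\alpha_2,\alpha_3$, leaving exactly $\sum_{\sigma\in\mqn}\beta(\sigma)=E_n(\beta_1,\dots,\beta_6)$. Making the same substitution in~\eqref{Fn-expansion} collapses the data to $\delta=1$, $\delta_1=\beta_4\beta_6+\beta_2\beta_5+\beta_1\beta_3$, $\delta_2=\beta_2\beta_4+\beta_1\beta_6+\beta_3\beta_5$, and with $t=1$ all powers of $t$ disappear, so that
\begin{equation*}
E_n(\beta_1,\dots,\beta_6)=\sum_{i+2j+3k=2n+1}\gamma_{n,i,j,k}\,\delta_2^{\,i}\,\delta_1^{\,j}.
\end{equation*}
This is precisely the displayed identity stated just before the corollary, and it is the only input I need.

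Next I would carry out the three substitutions $(\beta_1,\dots,\beta_6)=(x,y,1,1,1,1)$, $(1,1,x,y,1,1)$ and $(1,1,1,1,x,y)$ and check in each case that both $\delta_1$ and $\delta_2$ evaluate to $x+y+1$. The conceptual reason this is uniform is a matching observation: the three monomials of $\delta_2$ correspond to the perfect matching $\{2,4\},\{1,6\},\{3,5\}$ of $\{1,\dots,6\}$, those of $\delta_1$ to the matching $\{4,6\},\{2,5\},\{1,3\}$, and each specialization activates an edge of the complementary matching $\{1,2\},\{3,4\},\{5,6\}$ (setting its two indices to $x,y$ and all others to $1$). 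Since this last matching is edge-disjoint from both the $\delta_1$-matching and the $\delta_2$-matching, the two active variables always land in \emph{distinct} monomials of $\delta_1$ (and of $\delta_2$), each paired with a $1$, while the third monomial is $1\cdot 1$. Hence $\delta_1=\delta_2=x+y+1$ in all three cases, and each of the three specializations of $E_n$ equals
\begin{equation*}
\sum_{i+2j+3k=2n+1}\gamma_{n,i,j,k}(x+y+1)^{i+j},
\end{equation*}
which establishes the three-fold equality.

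Finally, for $e$-positivity I would invoke nonnegativity of the coefficients: by~\eqref{Cnxyz} each $\gamma_{n,i,j,k}$ counts a family of increasing plane trees, hence is a nonnegative integer. Writing $x+y+1=e_1(x,y,1)$, the common value above is a nonnegative integer combination of the powers $e_1^{\,i+j}$, i.e.\ of monomials in the elementary symmetric polynomials, which is exactly the claimed $e$-positivity.

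I do not expect a genuine obstacle, since the statement is a clean corollary of Theorem~\ref{thm03}; the only point requiring care is the combinatorial bookkeeping of the three pairings. The payoff of phrasing it via matchings is that it makes transparent \emph{why} all three specializations coincide: the pairs of active variables avoid every edge of both $\delta_1$ and $\delta_2$, which is what forces the symmetry $E_n(x,y,1,1,1,1)=E_n(1,1,x,y,1,1)=E_n(1,1,1,1,x,y)$.
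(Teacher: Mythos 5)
Your proposal is correct and matches the paper's intended argument exactly: the paper states the specialization $E_n(\beta_1,\dots,\beta_6)=\sum_{i+2j+3k=2n+1}\gamma_{n,i,j,k}\left(\beta_2\beta_4+\beta_1\beta_6+\beta_3\beta_5\right)^i\left(\beta_4\beta_6+\beta_2\beta_5+\beta_1\beta_3\right)^j$ of Theorem~\ref{thm03} just before the corollary, and the corollary follows by the same three substitutions you perform, each yielding $\sum_{i+2j+3k=2n+1}\gamma_{n,i,j,k}(x+y+1)^{i+j}$ with $\gamma_{n,i,j,k}\geqslant 0$. Your matching-theoretic explanation of why all three specializations collapse both factors to $x+y+1$ is a pleasant gloss on the same routine verification, not a different route.
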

\subsection{Proof of Theorem~\ref{thm01}}
\hspace*{\parindent}

A {\it simplified ternary increasing tree} is a ternary increasing tree with no exterior nodes.
In Figure~\ref{Fig002}, we give the simplified ternary increasing trees of order $2$,
where the left figure represents the three different figures in the right.
The {\it degree} of a vertex in a simplified ternary increasing tree is meant to be the number of its children.
\begin{figure}
\begin{center}
\begin{tikzpicture}
[emph/.style={edge from parent/.style={snakeline,draw}}]
\node (1) [circle,draw] {1}
    child [emph] {node (2) [circle,draw] {2}};
\path (1.east) node[above right] {[$P_1$]};
\path (2.east) node[above right] {[$P$]};
\end{tikzpicture}
=
\begin{tikzpicture}
[level 1/.style = {sibling distance = .7cm},
NONE/.style={edge from parent/.style={draw=none}}]
\node [circle,draw] {1}
    child {node [circle,draw] {2}}
    child [NONE] {}
    child [NONE] {};
\end{tikzpicture}
or
\begin{tikzpicture}
[level 1/.style = {sibling distance = .7cm},
NONE/.style={edge from parent/.style={draw=none}}]
\node [circle,draw] {1}
    child [NONE] {}
    child {node [circle,draw] {2}}
    child [NONE] {};
\end{tikzpicture}
or
\begin{tikzpicture}
[level 1/.style = {sibling distance = .7cm},
NONE/.style={edge from parent/.style={draw=none}}]
\node [circle,draw] {1}
    child [NONE] {}
    child [NONE] {}
    child {node [circle,draw] {2}};
\end{tikzpicture}
\caption{Simplified ternary increasing trees,~$Q_2=xyzpqrs(xyp+xzq+yzr)=PP_1$.}
\label{Fig002}
\end{center}
\end{figure}
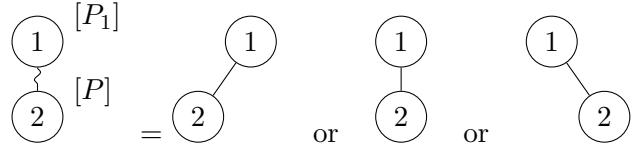

The weight $W_1$ of $\sigma\in\mqn$ is defined by
$$W_1(\sigma)=x^{\asc{(\sigma)}}y^{\plat{(\sigma)}}z^{\des(\sigma)}p^{\lap\sigma)}q^{\eudd(\sigma)}r^{\rpd(\sigma)}s^{\operatorname{apd}(\sigma)}t^{\operatorname{vv}(\sigma)}.$$
Using Table~\ref{Table1}, one can get the corresponding weight of $C_n\in\operatorname{CQ}_n$, and we use $W_2(C_n)$ to denote it, see~\eqref{subsection34} for a more general case. In other words,
if $\Gamma(\sigma)=C_n$, then $W_1(\sigma)=W_2(C_n)$.
In the following, we always set $P:=xyzpqrs,~P_1:=xyp+xzq+yzr$ and $P_2:=x+y+z$.

It is clear that
$W_1(11)=W_2((0,0))=xyzpqrs=P$.
When $n=2$, the weights of elements in $\mq_2$ and $\operatorname{CQ}_2$ can be listed as follows:
\begin{equation*}\label{w3}
{\underbrace{2211\leftrightarrow (0,0)(1,1)}_{xy^2z^2pqr^2s=Pyzr},~\underbrace{1221\leftrightarrow (0,0)(1,2)}_{x^2yz^2pq^2rs=Pxzq},~\underbrace{1122\leftrightarrow (0,0)(1,3)}_{x^2y^2zp^2qrs=Pxyp}},
\end{equation*}
and the sum of weights is given by $P(xyp+xzq+yzr)=PP_1$.


Given $C_n=(0,0)(a_1,b_1)(a_2,b_2)\cdots (a_{n-1},b_{n-1})\in \operatorname{CQ}_n$.
Consider the elements in $\operatorname{CQ}_{n+1}$ generated from $C_n$ by appending the $2$-tuples $(a_n,b_n)$, where $1\leqslant a_n\leqslant n$ and $1\leqslant b_n\leqslant 3$.
Let $T$ be the corresponding simplified ternary increasing tree of $C_n$.
We can add $n+1$ to $T$ as a child of a vertex, which is not of degree three. Let $T'$ be the resulting simplified ternary increasing tree.
We first give a labeling of $T$ as follows. Label a leaf of $T$ by $P$, a degree
one vertex by $P_1$, a degree two vertex by $P_2$ and a degree three vertex by $t$. It is clear that
the contribution of any leaf to the weight is $xyzpqrs$, so we set $P=xyzpqrs$.

The $2$-tuples $(a_n,b_n)$ can be divided into three classes:
\begin{itemize}
  \item if $a_n\neq a_i$ for all $1\leqslant i\leqslant n-1$, then the node $n+1$ will be the only child of a leaf of $T$.
 This operation corresponds to the following change of weights:
  \begin{equation}\label{G1}
 W_2(C_n)\rightarrow W_2(C_{n+1})=W_2(C_n)(xyp+xzq+yzr),
  \end{equation}
which yields the substitution $P\rightarrow PP_1$.
The contribution of any leaf to the weight is $xyzpqrs$ and that of a degree one vertex is $xyp+xzq+yzr$ (which represents that this vertex may has a left child, a middle child or a right child).
When we compute the corresponding weights of Stirling permutations, it follows from~\eqref{G1} that we need to set
$P_1=xyp+xzq+yzr$.
  \item if there is exactly one 2-tuple $(a_i,b_i)$ in $C_n$ such that $a_n=a_i$, then the node $n+1$ will be the second child of the node $a_i$.
  There are six cases to add $n+1$.
This operation corresponds to the substitution $P_1\rightarrow 2PP_2$. Since we have six cases to insert $n+1$ and the sum of increased weights is $2(x+y+z)$, so we set
$P_2=x+y+z$.
  \item if there are exactly two 2-tuples $(a_i,b_i)$ and $(a_j,b_j)$ in $C_n$ such that $a_n=a_i=a_j$ and $i<j$, then the node $n+1$ will be the third child of $a_i$, and $n+1$ becomes a leaf with label $P$. This operation corresponds to the substitution $P_2\rightarrow 3tP$.
\end{itemize}
The aforementioned three cases exhaust all the possibilities to construct $\operatorname{SP}$-codes of length $n+1$
from a $\operatorname{SP}$-code of length $n$ by appending $2$-tuples $(a_n,b_n)$. In conclusion, each case corresponds to an application of the
substitution rules defined by the following grammar:
\begin{equation}\label{Iw3}
I=\{P\rightarrow PP_1,~P_1\rightarrow 2PP_2,~P_2\rightarrow 3tP\}.
\end{equation}

Note that the sum of degrees of all vertices in a simplified ternary increasing tree in
$\mtn_n$ is $n$.
Setting $w=P$, $v=P_1$ and $u=P_2$, it follows from~\eqref{Chen22} that
\begin{equation}\label{Chen2201}
Q_n=D_G^n(x)=D_I^{n-1}(P)=t^n\sum_{i+2j+3k=2n+1}\gamma_{n,i,j,k}\left(\frac{P_2}{t}\right)^i\left(\frac{P_1}{t}\right)^j\left(\frac{P}{t}\right)^k.
\end{equation}
Upon taking $P=xyzpqrs,~P_1=xyp+xzq+yzr$ and $P_2=x+y+z$, we arrive at Theorem~\ref{thm01}.~$\qed$.
\subsection{Proof of Theorem~\ref{thm02}}
\hspace*{\parindent}

Recall that $Q_n:=Q_n(x,y,z,p,q,r,s,t)$.
Let $$R_n(P,P_1,P_2,t)=t^n\sum_{i+2j+3k=2n+1}\gamma_{n,i,j,k}\left(\frac{P_2}{t}\right)^i\left(\frac{P_1}{t}\right)^j\left(\frac{P}{t}\right)^k.$$
Since $\deg (P)+\deg (P_1)+\deg (P_2)+\deg (t)=n$ in any term of $R_n(P,P_1,P_2,t)$,
we define the polynomial $\widetilde{R}_n(x,y,z)$ such that
$$R_n(P,P_1,P_2,t)=P_2^n\widetilde{R}_n\left(\frac{P}{P_2},\frac{P_1}{P_2},\frac{t}{P_2}\right).$$
Upon taking $P=xyzpqrs,~P_1=xyp+xzq+yzr$ and $P_2=x+y+z$, it follows from~\eqref{Chen2201} that
\begin{equation}\label{QnRn}
Q_n=R_n(P,P_1,P_2,t)=(x+y+z)^n\widetilde{R}_n\left(\frac{xyzpqrs}{x+y+z},\frac{xyp+xzq+yzr}{x+y+z},\frac{t}{x+y+z}\right).
\end{equation}
Note that
$$Q_n(1,1,1,p,q,r,s,t)=R_n(pqrs,p+q+r,3,t)=3^n\widetilde{R}_n\left(\frac{pqrs}{3},\frac{p+q+r}{3},\frac{t}{3}\right).$$
Substituting $$p\rightarrow\frac{3xyp}{x+y+z},~q\rightarrow\frac{3xzq}{x+y+z},~r\rightarrow\frac{3yzr}{x+y+z},~s\rightarrow\frac{s(x+y+z)^2}{9xyz},~t\rightarrow \frac{3t}{x+y+z},$$
we find that
$$\frac{1}{3}pqrs\rightarrow \frac{xyzpqrs}{x+y+z},~\frac{p+q+r}{3}\rightarrow \frac{xyp+xzq+yzr}{x+y+z},~\frac{1}{3}t\rightarrow \frac{t}{x+y+z}.$$
It follows from~\eqref{QnRn} that
$$Q_n=\left(\frac{x+y+z}{3}\right)^nQ_n\left(1,1,1,\frac{3xyp}{x+y+z},\frac{3xzq}{x+y+z},\frac{3yzr}{x+y+z},\frac{s(x+y+z)^2}{9xyz},\frac{3t}{x+y+z}\right),$$
as desired. Since $C_n(x,y,z)=Q_n(x,y,z,1,1,1,1,1)$, we get
$$C_n(x,y,z)=\left(\frac{x+y+z}{3}\right)^nQ_n\left(1,1,1,\frac{3xy}{x+y+z},\frac{3xz}{x+y+z},\frac{3yz}{x+y+z},\frac{(x+y+z)^2}{9xyz},\frac{3}{x+y+z}\right),$$
which yields~\eqref{thm02cor2}. The completes the proof.~$\qed$.
\subsection{Proof of Theorem~\ref{thm03}}\label{subsection34}
\hspace*{\parindent}

Given a $\operatorname{SP}$-code $C_n=((0,0),(a_1,b_1),(a_2,b_2)\ldots,(a_{n-1},b_{n-1}))$.
We make the symbols:
\begin{align*}
\fbox{j}&=\#\{a_i\mid (a_i,j)\notin C_n\},\\
\fbox{$j_1,j_2$}&=\#\{a_i\mid (a_i,j_1)\notin C_n~\&~(a_i,j_2)\notin C_n\},\\
\fbox{$j_1,j_2,j_3$}&=\#\{a_i\mid (a_i,j_1)\notin C_n~\&~(a_i,j_2)\notin C_n~\&~(a_i,j_3)\notin C_n\},\\
\doublebox{j}&=\#\{a_i\mid (a_i,j)\in C_n\},\\
\doublebox{$j_1,j_2$}&=\#\{a_i\mid (a_i,j_1)\in C_n~\&~(a_i,j_2)\in C_n\},\\
\doublebox{$j_1,j_2,j_3$}&=\#\{a_i\mid (a_i,j_1)\in C_n~\&~(a_i,j_2)\in C_n~\&~(a_i,j_3)\in C_n\},\\
\fbox{$j_1$}\doublebox{$j_2$}&=\#\{a_i\mid (a_i,j_1)\notin C_n~\&~ (a_i,j_2)\in C_n\},\\
\fbox{$j_1$}\doublebox{$j_2,j_3$}&=\#\{a_i\mid (a_i,j_1)\notin C_n~\&~ (a_i,j_2)\in C_n~\&~ (a_i,j_3)\in C_n\}.
\end{align*}
Recall that $$\alpha(\sigma)=\alpha_1^{\operatorname{apap}(\sigma)}\alpha_2^{\operatorname{dpa}(\sigma)}\alpha_3^{\operatorname{pdpd}(\sigma)},~
\beta(\sigma)=\beta_1^{\dplat(\sigma)}\beta_2^{\dasc(\sigma)}\beta_3^{\operatorname{dd}(\sigma)}\beta_4^{\operatorname{uu}(\sigma)}
\beta_5^{\ddes(\sigma)}\beta_6^{\pasc(\sigma)}.$$
The weight $W_3$ of $\sigma\in\mqn$ is defined by
$$W_3:=\alpha(\sigma)\beta(\sigma)x^{\asc{(\sigma)}}y^{\plat{(\sigma)}}z^{\des(\sigma)}p^{\lap(\sigma)}
q^{\eudd(\sigma)}r^{\rpd(\sigma)}s^{\operatorname{apd}(\sigma)}t^{\operatorname{vv}(\sigma)}.$$
It follows from Table~\ref{Table1} that the corresponding weight $W_4$ of $C_n\in\operatorname{CQ}_n$ is given as follows:
\begin{equation}\label{subsection34}
W_4:=\alpha(C_n)\beta(C_n)x^{{\fbox{1}}}
y^{\fbox{2}}z^{\fbox{3}}p^{\fbox{1,2}}q^{\fbox{1,3}}r^{\fbox{2,3}}s^{\fbox{1,2,3}}t^{\doublebox{1,2,3}}.
\end{equation}
where
$$\alpha(C_n)=\alpha_1^{\fbox{$1$}\doublebox{$2,3$}}\alpha_2^{\fbox{$2$}\doublebox{$1,3$}}\alpha_3^{\fbox{$3$}\doublebox{$1,2$}},$$
$$\beta(C_n)=\beta_1^{\fbox{$2$}\doublebox{$1$}}\beta_2^{\fbox{$1$}\doublebox{$2$}}\beta_3^{\fbox{$3$}\doublebox{$1$}}\beta_4^{\fbox{$1$}\doublebox{$3$}}
\beta_5^{\fbox{$3$}\doublebox{$2$}}\beta_6^{\fbox{$2$}\doublebox{$3$}}.$$

Let $T$ be the corresponding simplified ternary increasing tree of $C_n$.
A labeling of $T$ as follows. Label a leaf of $T$ by $\delta$, a degree
one vertex by $\delta_1$, a degree two vertex by $\delta_2$ and a degree three vertex by $t$.
Consider all the possibilities to construct $\operatorname{SP}$-codes of length $n+1$
from a $\operatorname{SP}$-code of length $n$ by appending $2$-tuples $(a_n,b_n)$. In the same way as in the proof of Theorem~\ref{thm01},
each case corresponds to an application of the
substitution rules defined by the following grammar:
\begin{equation*}\label{Iw3}
J=\{\delta\rightarrow \delta \delta_1,~\delta_1\rightarrow 2\delta \delta_2,~\delta_2\rightarrow 3t\delta\},
\end{equation*}
where $\delta=xyzpqrs,~\delta_1=\beta_4\beta_6xyp+\beta_2\beta_5xzq+\beta_1\beta_3yzr$ and $\delta_2=\alpha_1\beta_2\beta_4x+\alpha_2\beta_1\beta_6y+\alpha_3\beta_3\beta_5z$.
Setting $w=\delta$, $v=\delta_1$ and $u=\delta_2$, as in~\eqref{Chen2201}, we get
\begin{equation*}\label{Chen22012}
F_n=D_J^{n-1}(\delta)=t^n\sum_{i+2j+3k=2n+1}\gamma_{n,i,j,k}\left(\frac{\delta_2}{t}\right)^i\left(\frac{\delta_1}{t}\right)^j\left(\frac{\delta}{t}\right)^k.
\end{equation*}
Then upon taking $\delta=xyzpqrs,~\delta_1=\beta_4\beta_6xyp+\beta_2\beta_5xzq+\beta_1\beta_3yzr$ and $\delta_2=\alpha_1\beta_2\beta_4x+\alpha_2\beta_1\beta_6y+\alpha_3\beta_3\beta_5z$, we arrive at~\eqref{Fn-expansion}. This completes the proof.~$\qed$.

\section*{Acknowledgements}
The first author was supported by the National Natural Science Foundation of China (Grant number 12071063) and
Taishan Scholars Program of Shandong Province (No. tsqn202211146).
The third author was supported by the National science and technology council (Grant number: MOST 112-2115-M-017-004).
\bibliographystyle{amsplain}

\end{document}